\definecolor{shadecolor}{gray}{0.90}				
\def\boitegrise#1#2{\begin{centerline}{\fcolorbox{black}{shadecolor}{~
    \begin{minipage}[t]{#2}{\vphantom{~}#1\vphantom{$A_{\displaystyle{A_A}}$}}
            \end{minipage}~}}\end{centerline}\medskip}
\newcommand{\monthyear}[1]{%
  \def\@monthyear{\uppercase{#1}}}
\newcommand{\volnumber}[1]{%
  \def\@volnumber{\uppercase{#1}}}
\def\ps@plain{\ps@empty
  \def\@oddfoot{\@monthyear \hfil \thepage}%
  \def\@evenfoot{\thepage \hfil \@volnumber}}
\def\ps@firstpage{\ps@plain}
\def\ps@headings{\ps@empty
  \def\@evenhead{%
    \setTrue{runhead}%
    \def\thanks{\protect\thanks@warning}%
    \uppercase{The Fibonacci Quarterly}\hfil}%
  \def\@oddhead{%
    \setTrue{runhead}%
    \def\thanks{\protect\thanks@warning}%
    \hfill\uppercase{GCD of sums of consecutive squares of Gibonacci numbers}}%
  \let\@mkboth\markboth
  \def\@evenfoot{%
    \thepage \hfil \@volnumber}%
  \def\@oddfoot{%
    \@monthyear \hfil \thepage}%
  }%
\newcommand{\pmodd}[1]{\!\!\!\!\pmod{#1}}
\newcommand{\FibSum}{\mathcal{F}(k)}
\newcommand{\LucSum}{\mathcal{L}(k)}
\newcommand{\GibSum}{\mathcal{G}_{G_0, G_1}\!(k)}
\newcommand{\GibSumPrime}{\mathcal{G}_{G_0^\prime, G_1^\prime}\!(k)}
\newcommand{\GibSumPrimeSquared}{\mathcal{G}_{G_0^\prime, G_1^\prime}^2\!(k)}
\newcommand{\Gisano}{\pi_{G_0,G_1}\!(m)}
\newcommand{\GibSeq}{\left(G_n\right)_{n \geq 0}}
\newcommand{\Dfunc}{D_{G_n,G_{n+1}}}
\newcommand{\Dinitial}{D_{G_0,G_1}}
\newcommand{\GibSumSquared}{\mathcal{G}_{G_0, G_1}^2\!(k)}
\newcommand{\LucSumSquared}{\mathcal{L}^2(k)}
\newcommand{\FibSumSquared}{\mathcal{F}^2(k)}
\newcommand{\red}[1]{\textcolor{red}{#1}}
\theoremstyle{plain}
\numberwithin{equation}{section}
\newtheorem{thm}{Theorem}[section]
\newtheorem{theorem}[thm]{Theorem}
\newtheorem{lemma}[thm]{Lemma}
\newtheorem{example}[thm]{Example}
\newtheorem{definition}[thm]{Definition}
\newtheorem{corollary}[thm]{Corollary}
\theoremstyle{definition}
\newtheorem{question}[thm]{Question}
\newtheorem{convention}[thm]{Convention}
\theoremstyle{remark}
\newtheorem{remark}[thm]{Remark}
\begin{document}
\monthyear{Month Year}
\volnumber{Volume, Number}
\setcounter{page}{1}

\title{GCD of sums of $k$ consecutive squares of generalized Fibonacci numbers}
\author{aBa Mbirika}
\address{Department of Mathematics\\
University of Wisconsin-Eau Claire\\
Eau Claire, WI 54702\\
USA}
\email{\textcolor{blue}{mbirika@uwec.edu}}

\author{J\"urgen Spilker}
\address{Mathematisches Institut\\
Albert-Ludwigs-Universit\"at\\
D-79104 Freiburg\\
Germany}
\email{\textcolor{blue}{juergen.spilker@t-online.de}}

\begin{abstract}
In 2021, Guyer and Mbirika gave two equivalent formulas that computed the greatest common divisor (GCD) of all sums of $k$ consecutive terms in the generalized Fibonacci sequence $\left(G_n\right)_{n \geq 0}$ given by the recurrence $G_n = G_{n-1} + G_{n-2}$ for all $n \geq 2$ with integral initial conditions $G_0$ and $G_1$. In this current paper, we extend their results to the GCD of all sums of $k$ consecutive squares of these numbers. Denoting these GCD values by the symbol $\mathcal{G}_{G_0, G_1}^2\!(k)$, we prove $\mathcal{G}_{G_0, G_1}^2\!(k) = \gcd\left(G_k G_{k+1} - G_0 G_1,\; G_{k+1}^2 - G_1^2,\; G_{k+2}^2 - G_2^2\right)$. Moreover, we provide very tantalizing closed forms in the specific settings of the Fibonacci, Lucas, and generalized Fibonacci numbers. We close with a number of open questions for further research.
\end{abstract}

\maketitle




\section{Introduction and motivation}\label{sec:introduction}

Appearing in the literature as early as 1901 by Tagiuri~\cite{Tagiuri1901}, the generalized Fibonacci numbers (or so-called \textit{Gibonacci numbers}\footnote{Thomas Koshy attributes Art Benjamin and Jennifer Quinn for coining this term ``Gibonacci'' in their 2003 book \textit{Proofs that Really Count: The Art of Combinatorial Proof}~\cite{Benjamin2003}.}) are defined by the recurrence
$$G_n=G_{n-1}+G_{n-2} \ \text{for all} \ n\geq 2$$
with initial conditions $G_0,G_1 \in \mathbb{Z}$. In 1963, a problem was proposed by I.~D.~Ruggles in the inaugural issue of the \textit{Fibonacci Quarterly} on a closed form for the sum of any twenty consecutive Fibonacci numbers~\cite{Ruggles1963}. Since then, there has been numerous papers exploring the sums of consecutive Fibonacci or Lucas numbers~\cite{Iyer1969, Tekcan2007, Tekcan2008, Demirturk2010, Cerin2013, Shtefan2018}. This current paper continues this line of research, extending it to the greatest common divisor (GCD) of sums of certain powers of Gibonacci numbers. This current work extends earlier work of Guyer and Mbirika who explored the GCD of the sums of $k$ consecutive Gibonacci numbers, and consequently $k$ consecutive Fibonacci and Lucas numbers~\cite{Guyer_Mbirika2021}. More precisely, given $k \in \mathbb{N}$ they found the exact value of the GCD of an infinite number of finite sums $\sum_{i=1}^k G_i \;,\;\; \sum_{i=2}^{k+1} G_i \;,\;\; \sum_{i=3}^{k+2} G_i \;, \ldots$ thereby computing the GCD of the terms in the sequence $( \sum_{i=1}^k G_{n+i})_{n \geq 0}$. For brevity, they used the symbols $\FibSum$, $\LucSum$, and $\GibSum$, respectively, to denote the three values
$$ \gcd\left\lbrace \left( \sum_{i=1}^kF_{n+i} \right)_{n \geq 0}\right\rbrace, \; \gcd\left\lbrace \left( \sum_{i=1}^kL_{n+i} \right)_{n \geq 0}\right\rbrace, \text{ and } \gcd\left\lbrace \left( \sum_{i=1}^kG_{n+i} \right)_{n \geq 0}\right\rbrace,$$
where $(F_n)_{n\geq 0}$, $(L_n)_{n\geq 0}$, and $(G_n)_{n\geq 0}$ denote the Fibonacci, Lucas, and Gibonacci sequences. A main result of Guyer and Mbirika was the formula $\GibSum = \gcd(G_{k+1}-G_1,\, G_{k+2}-G_2)$~\cite[Theorem~15]{Guyer_Mbirika2021}, or an equivalent formula using generalized Pisano periods~\cite[Theorem~25]{Guyer_Mbirika2021}. Using either formula yields the values of $\FibSum$, $\LucSum$, and $\GibSum$ in Table~\ref{table:guyer_aBa_results}.

\vspace{-.165in}

\begin{table}[H]
\renewcommand{\arraystretch}{1}
\begin{center}
 \begin{tabular}{|c||c|c|c|c|}
 \hline
 $k$ & \;$\FibSum$\; & \;$\LucSum$\; & $\GibSum$\\ 
 \hline\hline
 $k\equiv 0,4,8 \pmod{12}$ & $F_{k/2}$ & $5F_{k/2}$ & $F_{k/2}^{\,\red a}$ \text{or} $5F_{k/2}^{\,\red b}$\\ 
 \hline
 $k\equiv 2,6,10 \pmod{12}$ & $L_{k/2}$ & $L_{k/2}$ & $L_{k/2}$\\
 \hline
 $k\equiv 3,9 \pmod{12}$ & 2 & 2 & $2^{\,\red c}$\\
 \hline
 $k\equiv 1,5,7,11 \pmod{12}$ & 1 & 1 & $1^{\,\red c}$\\
 \hline
\end{tabular}
\caption{Closed forms for the values $\FibSum$, $\LucSum$, and $\GibSum$}
\label{table:guyer_aBa_results}
\footnotesize{\red{$^a$} This value holds if and only if $\gcd(G_0 + G_2, G_1 + G_3) = 1$.\\
\red{$^b$} This value holds if and only if $\gcd(G_0 + G_2, G_1 + G_3) \neq 1$.\\
\red{$^c$}  These values hold if $G_1^2 - G_0 G_1 - G_0^2 = \pm 1$.} 
\end{center}
\end{table}
\vspace{-.165in}

In this current paper, we explore the GCD of sums of \textbf{squares} of $k$ consecutive generalized Fibonacci numbers (and in particular, the Fibonacci and Lucas numbers).  For brevity, we use the symbols $\FibSumSquared$, $\LucSumSquared$, and $\GibSumSquared$, respectively, to denote the three values
$$ \gcd\left\lbrace \left( \sum_{i=1}^kF_{n+i}^2 \right)_{n \geq 0}\right\rbrace, \; \gcd\left\lbrace \left( \sum_{i=1}^kL_{n+i}^2 \right)_{n \geq 0}\right\rbrace, \text{ and } \gcd\left\lbrace \left( \sum_{i=1}^kG_{n+i}^2 \right)_{n \geq 0}\right\rbrace.$$

This current paper arose when second author Spilker offered a proof of a conjecture, stating $\FibSumSquared = F_k$ when $k$ is even, given in the paper of Guyer and Mbirika~\cite[Question~54]{Guyer_Mbirika2021}, who wrote ``We feel this is simply too beautiful a result to not be true.'' Spilker's proof of this conjecture is a motivation for this current paper, wherein we extend the latter Fibonacci result to the more general setting of $\GibSumSquared$ for all $k$ values. In this paper, we prove the following main results given in Table~\ref{table:aBa_Juergen_results}. Note that the value $\mu$ (given in Definition~\ref{def:characteristic_mu_parameter}) equals $G_1^2 - G_0 G_1 - G_0^2$, and the value $g_k$ equals $\gcd\left(G_{k+1}^2 - G_1^2,\; G_{k+2}^2 - G_2^2\right)$.

\begin{table}[H]
\renewcommand{\arraystretch}{1.1}
\begin{center}
    \begin{tabular}{|c||c|c|c|c|}
    \hline
         $k$ & $\FibSumSquared$ & $\LucSumSquared$ & $\GibSumSquared$ & Proof in this paper \\ \hline\hline
         $k$ even and $5\nmid\mu$ & $F_k$ & $5F_k$ & $F_k$ & Theorems~\ref{thm:FibSumSquared_closed_forms}, \ref{thm:LucSumSquared_closed_forms}, and \ref{thm:closed_form_for_GibSumSquared_when_k_even}, respectively\\ \hline
         $k$ even and $5\mid\mu$ & $F_k$ & $5F_k$ & $5F_k$ & Theorems~\ref{thm:FibSumSquared_closed_forms}, \ref{thm:LucSumSquared_closed_forms}, and \ref{thm:closed_form_for_GibSumSquared_when_k_even}, respectively\\ \hline
         $k \equiv 3 \pmod{6}$ & 2 & 2 & $\gcd(2\mu, g_k)$ & Theorems~\ref{thm:FibSumSquared_closed_forms}, \ref{thm:LucSumSquared_closed_forms}, and \ref{thm:Simpler_GibSum_formula_for_k_even_versus_odd}, respectively\\ \hline
         $k \equiv 1,5 \pmod{6}$ & 1 & 1 & $\gcd(2\mu, g_k)$ & Theorems~\ref{thm:FibSumSquared_closed_forms}, \ref{thm:LucSumSquared_closed_forms}, and \ref{thm:Simpler_GibSum_formula_for_k_even_versus_odd}, respectively\\ \hline
    \end{tabular}
\caption{Closed forms for the values $\FibSumSquared$, $\LucSumSquared$, and $\GibSumSquared$}
\label{table:aBa_Juergen_results}
\end{center}
\end{table}

\begin{remark}
Compare Tables~\ref{table:guyer_aBa_results} and $\ref{table:aBa_Juergen_results}$ and the closed forms for the values $\FibSum$ and $\FibSumSquared$, respectively, when $k$ is even. Observe that $\FibSum = F_{k/2}$ when $k \equiv 0 \pmod{4}$, whereas $\FibSum = L_{k/2}$ when $k \equiv 2 \pmod{4}$. However in this new setting of the GCD of sums of squares of $k$ consecutive Fibonacci numbers, the values $\FibSumSquared$ have no dependency on the residue class modulo 4 of $k$ when $k$ is even, so in some sense $\FibSumSquared$ is more well behaved than its seemingly simpler counterpart $\FibSum$. Similar phenomena happens for $k$ even when we compare $\LucSum$ and $\LucSumSquared$.
\end{remark}

The paper is broken down as follows. In Section~\ref{sec:definitions}, we provide the necessary definitions used in the subsequent sections. In Section~\ref{sec:generalized_Fibonacci_setting}, we give a simple formula for the value $\GibSumSquared$. In Section~\ref{sec:closed_form_in_Gibonacci_setting}, we provide nice closed forms for the value $\GibSumSquared$. Moreover in Section~\ref{sec:closed_form_in_Fibonacci_and_Lucas_settings}, we provide similarly nice closed forms for the values $\FibSumSquared$ and $\LucSumSquared$, respectively. Finally in Section~\ref{sec:open questions}, we end with some tantalizing open questions.


\section{Preliminary definitions}\label{sec:definitions}

\begin{definition}
The \textit{generalized Fibonacci sequence} $\left(G_n\right)_{n \geq 0}$ is defined by the recurrence relation
\begin{equation*}
G_n = G_{n-1} + G_{n-2}
\end{equation*}
for $n \geq 2$ and arbitrary initial conditions $G_0,G_1 \in \mathbb{Z}$. The \textit{Fibonacci sequence} $\left(F_n\right)_{n \geq 0}$ is recovered when $G_0=0$ and $G_1=1$, and the \textit{Lucas sequence} $\left(L_n\right)_{n \geq 0}$ is recovered when $G_0=2$ and $G_1=1$. For brevity, we use the term \textit{Gibonacci sequence} to refer to any generalized Fibonacci sequence.
\end{definition}

\boitegrise{
\begin{convention}\label{conv:relatively_prime_initial_values}
For reasons explained in Theorem~\ref{thm:relatively_prime_initial_values_only} and Convention~\ref{conv:use_only_relatively_prime_initial_values}, it suffices to consider only the Gibonacci sequences with relatively prime initial conditions $G_0$ and $G_1$.
\vspace{-.2in}
\end{convention}}{0.97\textwidth}

\begin{definition}\label{def:characteristic_mu_parameter}
The \textit{characteristic} of the Gibonacci sequence $(G_n)_{n \geq 0}$ is denoted $\mu$ and is defined as $\mu = G_1^2 - G_0 G_1 - G_0^2$.
\end{definition}

\begin{remark}\label{rem:differing_mu_notations}
Many authors have differing notation and/or equivalent definitions for this value $\mu$. For instance, Guyer-Mbirika denote this value as $\Dinitial$ to highlight this value's dependency on the initial conditions~\cite{Guyer_Mbirika2021}. Moreover, Koshy maintains the symbol $\mu$, but defines it as $G_1^2 + G_1 G_2 - G_2^2$~\cite{Koshy2018}. On the other hand, Vajda reserves no symbol for $\mu$ but writes the value as $G_1^2 - G_0 G_2$~\cite{Vajda1989}.
\end{remark}

Lastly, it is well known that the Fibonacci and Lucas sequences (and more generally any Gibonacci sequence) under a modulus are periodic. The lengths of the periods of these sequences are called Pisano periods. More generally, we have the following definition of the length of the period of a Gibonacci sequence under a modulus.

\begin{definition}\label{def:Gisano_period}
Let $m \geq 2$. The \textit{generalized Pisano period}, $\Gisano$, of the Gibonacci sequence $\GibSeq$ is the smallest positive integer $r$ such that
$$ G_r \equiv G_0 \pmodd{m} \;\;\text{ and }\;\; G_{r+1} \equiv G_1 \pmodd{m}.$$
The value $r$ is dependent on both the initial conditions $G_0,G_1 \in \mathbb{Z}$ and the modulus $m$. In the Fibonacci (respectively, Lucas) setting we denote this period by $\pi_F(m)$ (respectively, $\pi_L(m)$).
\end{definition}


\section{The generalized Fibonacci setting}\label{sec:generalized_Fibonacci_setting}

\subsection{A formula for \texorpdfstring{$\GibSumSquared$}{curly G squared}}\label{subsec:simple_formulas_for_curly_G}
In this subsection, we derive the following formula for $\GibSumSquared$ in Theorem~\ref{thm:Simple_GibSum_formula}:
$$ \GibSumSquared = \gcd\left(G_k G_{k+1} - G_0 G_1,\; G_{k+1}^2 - G_1^2,\; G_{k+2}^2 - G_2^2\right).$$

\begin{lemma}\label{lem:gcd_of_infinite_sequence}
Let $(a_n)_{n\geq 0}$ be a sequence of integers. The following identity holds:
$$ \gcd(a_0, a_1, a_2, a_3, \ldots) = \gcd(a_0,\; a_1 - a_0,\; a_2 - a_1,\; a_3 - a_2, \ldots). $$
\end{lemma}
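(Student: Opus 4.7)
The plan is to establish the equality by showing mutual divisibility between
$$d \;=\; \gcd(a_0, a_1, a_2, a_3, \ldots) \qquad \text{and} \qquad d' \;=\; \gcd(a_0,\; a_1 - a_0,\; a_2 - a_1,\; a_3 - a_2, \ldots).$$
Before doing that, I would briefly note that both GCDs are well defined as positive integers (or $0$): the partial GCDs $\gcd(a_0, \ldots, a_n)$ form a nonincreasing chain of nonnegative integers that must eventually stabilize, and its eventual value is the GCD of the whole sequence. The same applies to the second sequence.

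For $d \mid d'$, the key observation is that if an integer divides every $a_i$, then it divides every difference $a_{i+1} - a_i$. So $d$ divides $a_0$ and divides $a_{i+1} - a_i$ for all $i \geq 0$, hence $d$ divides the GCD of that collection, namely $d'$.

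For $d' \mid d$, I would argue by induction on $n$ that $d' \mid a_n$ for every $n \geq 0$. The base case $n=0$ is immediate since $a_0$ is one of the entries defining $d'$. For the inductive step, suppose $d' \mid a_n$; since $d'$ also divides $a_{n+1} - a_n$ by definition, summing gives $d' \mid a_{n+1}$. Therefore $d'$ divides every term of $(a_n)_{n \geq 0}$, so $d' \mid d$.

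Combining the two divisibilities gives $d = d'$. There is no real obstacle here; the only thing worth being careful about is the meaning of the GCD of an infinite set, which I would address with the stabilization argument above so that the statement is unambiguous.
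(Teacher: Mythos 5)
Your proposal is correct and follows essentially the same argument as the paper: both directions are proved by mutual divisibility, with $d \mid d'$ from the fact that a common divisor of all $a_i$ divides all consecutive differences, and $d' \mid d$ by the same telescoping induction. The only addition is your remark on the well-definedness of the GCD of an infinite set, which the paper omits but which does no harm.
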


\begin{proof}
Set $d\colonequals \gcd(a_0, a_1, a_2, a_3, \ldots)$ and $e\colonequals \gcd(a_0,\; a_1 - a_0,\; a_2 - a_1,\; a_3 - a_2, \ldots)$. We will show that $d = e$. By assumption, $d$ divides $a_i$ for all $i \geq 0$, and hence $d$ divides $a_{i+1} - a_i$ for all $i \geq 0$. Hence $d$ divides $e$ and so $d \leq e$. To prove the reverse inequality, observe that $e$ divides both $a_0$ and $a_1 - a_0$, and hence $e$ divides $a_1$. Consequently since $e$ divides $a_1$ and $a_2 - a_1$, then $e$ divides $a_2$. Continuing inductively in this manner, we see that $e$ must divide $a_i$ for all $i$. Hence $e$ divides $d$ and so $e \leq d$.
\end{proof}

The next two lemmas are well-known results whose proofs can be found in Vajda~\cite{Vajda1989}.

\begin{lemma}\label{lem:sum_of_first_Gib_numbers_squared}
For all $k \geq 1$, we have
$$ \sum_{i=1}^k G_i^2 = G_k G_{k+1} - G_0 G_1. $$
\end{lemma}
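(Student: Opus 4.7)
The plan is to give a short telescoping proof, with induction as a backup approach. The key observation is that the recurrence $G_{n+1} = G_n + G_{n-1}$ rewrites as $G_{n+1} - G_{n-1} = G_n$, which lets us express $G_i^2$ as a telescoping difference of products of consecutive Gibonacci numbers.

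First I would establish the identity
\[
G_i^2 = G_i \bigl(G_{i+1} - G_{i-1}\bigr) = G_i G_{i+1} - G_{i-1} G_i
\]
valid for all $i \geq 1$. This is an immediate consequence of the defining recurrence applied in the form $G_{i+1} - G_{i-1} = G_i$ (which holds for $i \geq 1$, using that $G_2 = G_0 + G_1$ covers the boundary case $i=1$).

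Next I would sum this identity from $i=1$ to $k$. The right-hand side telescopes:
\[
\sum_{i=1}^{k} G_i^2 = \sum_{i=1}^{k} \bigl(G_i G_{i+1} - G_{i-1} G_i\bigr) = G_k G_{k+1} - G_0 G_1,
\]
which is precisely the claimed formula.

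There is no real obstacle here: the only subtlety is making sure the identity $G_{i+1} - G_{i-1} = G_i$ is available at $i=1$, which requires $G_{-1}$ interpretation only if one writes it that way; using $i \geq 1$ with $G_2 - G_0 = G_1$ at the base avoids any issue. Alternatively, a straightforward induction on $k$ would work: the base case $k=1$ reduces to $G_1^2 = G_1 G_2 - G_0 G_1 = G_1(G_2 - G_0)$, and the inductive step adds $G_{k+1}^2$ to both sides and factors $G_{k+1}(G_k + G_{k+1}) = G_{k+1} G_{k+2}$. Either route is a few lines.
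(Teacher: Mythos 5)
Your proof is correct. The telescoping identity $G_i^2 = G_i(G_{i+1}-G_{i-1}) = G_iG_{i+1} - G_{i-1}G_i$ is valid for all $i\geq 1$ (it only uses $G_{i+1}=G_i+G_{i-1}$, which holds for $i\geq 1$), and summing from $i=1$ to $k$ collapses to $G_kG_{k+1}-G_0G_1$ as claimed; your care about the boundary case $i=1$ is appropriate but, as you note, no negative-index term is ever needed. The paper itself does not prove this lemma --- it simply cites Identity~(44) of Vajda --- so your argument is a self-contained replacement rather than a variant of the paper's reasoning. What your approach buys is independence from the reference: it is the standard one-line derivation of Vajda's identity, and either the telescoping version or the induction you sketch as a backup would serve equally well if the authors wanted the paper to be self-contained.
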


\begin{proof}
See Identity~(44) of Vajda~\cite[p.~43]{Vajda1989}.
\end{proof}

\begin{lemma}\label{lem:Gibonacci_Vorobiev_identity}
For all $m,n \in \mathbb{Z}$, we have
$$ G_{m+n} = G_{m+1} F_n + G_m F_{n-1}. $$
\end{lemma}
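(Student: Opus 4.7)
The plan is to fix $m$ and induct on $n$, exploiting the fact that both sides of the identity obey the Fibonacci recurrence in the variable $n$. Since the claim is stated for all $m, n \in \mathbb{Z}$, I will first handle $n \geq 0$ by ordinary induction, then extend to negative $n$ by a symmetric descending induction using the backward recurrence $H_{j-2} = H_j - H_{j-1}$ (valid for any sequence obeying the Fibonacci-type recursion, including $F$ and $G$).

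For the base cases, at $n = 0$ I would check that $G_{m+1} F_0 + G_m F_{-1} = G_{m+1} \cdot 0 + G_m \cdot 1 = G_m$, using the standard extension $F_{-1} = 1$ obtained from $F_1 = F_0 + F_{-1}$. At $n = 1$ I would check $G_{m+1} F_1 + G_m F_0 = G_{m+1} \cdot 1 + G_m \cdot 0 = G_{m+1}$. Both match the left-hand side.

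For the inductive step, assuming the identity at $n-1$ and $n$ (with $m$ fixed), I would compute
\begin{align*}
G_{m+(n+1)} &= G_{m+n} + G_{m+n-1} \\
&= \bigl(G_{m+1} F_n + G_m F_{n-1}\bigr) + \bigl(G_{m+1} F_{n-1} + G_m F_{n-2}\bigr) \\
&= G_{m+1}\bigl(F_n + F_{n-1}\bigr) + G_m\bigl(F_{n-1} + F_{n-2}\bigr) \\
&= G_{m+1} F_{n+1} + G_m F_n,
\end{align*}
which is the identity at $n+1$. An entirely analogous step, using $G_{m+n-1} = G_{m+n+1} - G_{m+n}$ and $F_{n-2} = F_n - F_{n-1}$, handles the descent to $n - 1$ and thus extends the identity to all negative $n$.

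There is no real obstacle here; the only subtlety is being careful with the convention $F_{-1} = 1$ in the base case $n = 0$, which is what makes the identity hold cleanly in the edge case. Since the authors cite Vajda for the proof, a bare reference would also suffice, but the two-step induction above gives a self-contained derivation in a few lines.
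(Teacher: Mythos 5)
Your proof is correct. Note, though, that the paper does not actually prove this lemma itself; it simply cites Identity~(8) of Vajda, so your double induction is a genuinely self-contained alternative rather than a reproduction of the paper's argument. Your base cases at $n=0$ and $n=1$ are right (with the standard extension $F_{-1}=1$ forced by $F_1=F_0+F_{-1}$), the forward step correctly uses that both $n \mapsto G_{m+n}$ and $n \mapsto G_{m+1}F_n + G_m F_{n-1}$ satisfy the same second-order recurrence, and the descending step via $H_{j-2}=H_j-H_{j-1}$ legitimately extends the identity to all $n \in \mathbb{Z}$. The one point worth making explicit is that the statement quantifies over all $m, n \in \mathbb{Z}$, so both the Fibonacci and Gibonacci sequences must be understood as extended to negative indices by the backward recurrence; you acknowledge this for $F$ via $F_{-1}$, and the same convention is implicitly needed for $G$ when $m+n<0$ (the paper uses this convention elsewhere, e.g.\ in Lemma~\ref{lem:Consec_Gib_GCD}). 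What your approach buys is a proof from first principles in a few lines; what the citation buys is brevity and consistency with the paper's practice of outsourcing classical identities to Vajda.
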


\begin{proof}
See Identity~(8) of Vajda~\cite[pp.~24--25]{Vajda1989}.
\end{proof}

We are now ready to prove the main theorem of this subsection, namely a formula for the value $\GibSumSquared$.

\begin{theorem}\label{thm:Simple_GibSum_formula}
For all $k \geq 1$, we have
$$ \GibSumSquared = \gcd\left(G_k G_{k+1} - G_0 G_1,\; G_{k+1}^2 - G_1^2,\; G_{k+2}^2 - G_2^2\right).$$
\end{theorem}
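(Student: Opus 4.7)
The plan is to apply Lemma~\ref{lem:gcd_of_infinite_sequence} to the sequence $S_n \colonequals \sum_{i=1}^{k} G_{n+i}^2$, so that
$$\GibSumSquared = \gcd(S_0,\; S_1 - S_0,\; S_2 - S_1,\; S_3 - S_2, \ldots).$$
Lemma~\ref{lem:sum_of_first_Gib_numbers_squared} immediately gives $S_0 = G_k G_{k+1} - G_0 G_1$, and a direct telescoping yields $S_{n+1} - S_n = G_{n+k+1}^2 - G_{n+1}^2$. Writing $T_n \colonequals G_{n+k}^2 - G_n^2$ for $n \geq 1$, we therefore have
$$\GibSumSquared = \gcd\bigl(G_k G_{k+1} - G_0 G_1,\; T_1,\; T_2,\; T_3, \ldots\bigr).$$

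The crux is to recast each $T_n$ as an explicit integer combination of the three quantities in the target formula. Using Lemma~\ref{lem:Gibonacci_Vorobiev_identity} with $m = k$ and with $m = 0$ gives $G_{n+k} = G_{k+1} F_n + G_k F_{n-1}$ and $G_n = G_1 F_n + G_0 F_{n-1}$. Squaring each and subtracting produces the central identity
$$T_n = A\, F_n^2 + 2B\, F_n F_{n-1} + C\, F_{n-1}^2,$$
where $A \colonequals G_{k+1}^2 - G_1^2$, $B \colonequals G_k G_{k+1} - G_0 G_1$, and $C \colonequals G_k^2 - G_0^2$. Specializing at $n=1$ and $n=2$ (using $F_0=0$, $F_1=F_2=1$) yields $T_1=A$ and $T_2 = A+2B+C$, whence $\gcd(B, T_1, T_2) = \gcd(A, B, C)$.

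To finish, I would argue that the infinite GCD collapses to the three-term one by a two-sided divisibility. The formula for $T_n$ makes it clear that $\gcd(A, B, C)$ divides every $T_n$, so $\gcd(A, B, C)$ divides $\gcd(B, T_1, T_2, T_3, \ldots)$. Conversely, $\gcd(B, T_1, T_2, T_3, \ldots)$ trivially divides the smaller collection $\gcd(B, T_1, T_2) = \gcd(A, B, C)$. Hence the two are equal, and re-expressing $T_1 = G_{k+1}^2 - G_1^2$ and $T_2 = G_{k+2}^2 - G_2^2$ recovers the closed form in the statement.

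The only substantive computation is the squaring-and-subtracting step that produces the $A F_n^2 + 2B F_n F_{n-1} + C F_{n-1}^2$ identity for $T_n$; that is where I expect the main obstacle to lie, since one must invoke Vajda's identity with two different base points and keep track of the correct Fibonacci indices. Once that identity is in hand, the remaining work is purely divisibility bookkeeping with no case analysis on $k$.
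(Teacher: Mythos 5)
Your proposal is correct and follows essentially the same route as the paper's proof: telescoping via Lemma~\ref{lem:gcd_of_infinite_sequence}, evaluating $S_0$ with Lemma~\ref{lem:sum_of_first_Gib_numbers_squared}, and using Lemma~\ref{lem:Gibonacci_Vorobiev_identity} to write each difference as an integer combination of three fixed quantities so that the infinite GCD collapses to a finite one. The only (cosmetic) difference is your choice of base points $m=k$ and $m=0$ in Vajda's identity rather than the paper's $m=k+1$ and $m=1$, which makes your cross-term coefficient equal to $2S_0$ directly and spares you the paper's small extra substitution $S_1 = H_0 + S_0$.
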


\begin{proof}
Fix $k \geq 1$ and for $n \geq 0$, set $S_n \colonequals \sum_{i=1}^k G_{n+i}^2$ and $H_n \colonequals S_{n+1} - S_n$. Hence we have
\begin{align*}
    \GibSumSquared &= \gcd \left( S_0, S_1, S_2, S_3, \ldots \right) = \gcd \left(S_0, H_0, H_1, H_2, \ldots\right),
\end{align*}
where the second equality holds by Lemma~\ref{lem:gcd_of_infinite_sequence}. We will show that for all $n \geq 2$, the value $H_n$ is a linear combination of $S_0$, $H_0$, and $H_1$, and hence the last equality above coincides with $\gcd(S_0, H_0, H_1)$, and the result follows. To that end, we first prove the following two equivalent expressions for $S_n$ and $H_n$, respectively:
\begin{align}
    S_n &= G_{n+k} G_{n+k+1} - G_n G_{n+1} \label{eq:S_n_formula}\\
    H_n &= G_{n+(k+1)}^2 - G_{n+1}^2. \label{eq:H_n_formula}
\end{align}
Identity~\eqref{eq:S_n_formula} holds by the sequence of equalities
\begin{align*}
    S_n &= \sum_{i=1}^k G_{n+i}^2\\
    &= \sum_{i=1}^{n+k} G_i^2 - \sum_{i=1}^{n} G_i^2\\
    &= \left( G_{n+k} G_{n+k+1} - G_0 G_1 \right) - \left( G_{n} G_{n+1} - G_0 G_1 \right) &\text{by Lemma~\ref{lem:sum_of_first_Gib_numbers_squared}}\\
    &= G_{n+k} G_{n+k+1} - G_n G_{n+1}.
\end{align*}
Moreover, Identity~\eqref{eq:H_n_formula} holds by the sequence of equalities
\begin{align*}
    H_n &= S_{n+1} - S_n\\
    &= \sum_{i=1}^k G_{n+1+i}^2 - \sum_{i=1}^k G_{n+i}^2\\
    &= ( G_{n+2}^2 + G_{n+3}^2 + \cdots + G_{n+(k+1)}^2 ) - ( G_{n+1}^2 + G_{n+2}^2 + \cdots + G_{n+k}^2 )\\
    &= G_{n+(k+1)}^2 - G_{n+1}^2.
\end{align*}
Applying Lemma~\ref{lem:Gibonacci_Vorobiev_identity} to both $G_{n+(k+1)}$ and $G_{n+1}$ in the latter equality and expanding the squared terms, we have
\begin{align*}
    H_n &= \left(G_{k+2} F_n + G_{k+1} F_{n-1} \right)^2 - \left(G_2 F_n + G_1 F_{n-1} \right)^2\\
    &= \left( G_{k+2}^2 F_n^2 + G_{k+1}^2 F_{n-1}^2 +2 (G_{k+1} G_{k+2}) (F_{n-1} F_n) \right)\\ 
    & \hspace{2.25in} - \left( G_2^2 F_n^2 + G_1^2 F_{n-1}^2 +2 (G_1 G_2) (F_{n-1} F_n) \right)\\
    &= \left( G_{k+2}^2 - G_2^2 \right) \cdot F_n^2 + \left( G_{k+1}^2 - G_1^2 \right) \cdot F_{n-1}^2\\
    & \hspace{2.25in} + 2 \left( G_{k+1} G_{k+2} - G_1 G_2 \right) \cdot (F_{n-1} F_n)\\
    &= H_1 F_n^2 + H_0 F_{n-1}^2 + 2 S_1 (F_{n-1} F_n).
\end{align*}
where the last equality holds by Identities~\eqref{eq:S_n_formula} and \eqref{eq:H_n_formula}. Lastly, since $H_0 = S_1 - S_0$ implies $S_1 = H_0 + S_0$, we can rewrite the last equality as
$$ H_n = S_0 (2 F_{n-1} F_n) + H_0 (F_{n-1}^2 + 2 F_{n-1} F_n) + H_1 F_n^2, $$
and hence for $n \geq 2$, we see that $H_n$ is an integer linear combination of $S_0$, $H_0$, and $H_1$, as desired. We conclude that
\begin{align*}
    \GibSumSquared &= \gcd \left(S_0, H_0, H_1, H_2, \ldots\right)\\
    &= \gcd \left(S_0, H_0, H_1\right)\\
    &= \gcd\left(G_k G_{k+1} - G_0 G_1,\; G_{k+1}^2 - G_1^2,\; G_{k+2}^2 - G_2^2\right),
\end{align*}
where the last equality holds by Identities~\eqref{eq:S_n_formula} and \eqref{eq:H_n_formula}.
\end{proof}

Next we show that it is sufficient to explore only the Gibonacci sequences which have relatively prime initial values

\begin{lemma}\label{lem:Consec_Gib_GCD}
For all $n \in \mathbb{Z}$, the values $\gcd(G_{n+1}, G_{n+2})$ and $\gcd(G_{n}, G_{n+1})$ coincide. In particular, $\gcd(G_0, G_1) = \gcd(G_n, G_{n+1})$ for all $n \in \mathbb{Z}$.
\end{lemma}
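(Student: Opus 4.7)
The plan is to use the Gibonacci recurrence itself together with a standard property of the gcd. The recurrence $G_{n+2} = G_{n+1} + G_n$ (which extends unambiguously to all $n \in \mathbb{Z}$ by rewriting it as $G_n = G_{n+2} - G_{n+1}$) tells us that $G_{n+2}$ and $G_n$ differ by a multiple of $G_{n+1}$. Thus for any integer $n$,
$$\gcd(G_{n+1}, G_{n+2}) \;=\; \gcd\bigl(G_{n+1},\, G_{n+1} + G_n\bigr) \;=\; \gcd(G_{n+1}, G_n),$$
where the second equality is the elementary fact that $\gcd(a, a+b) = \gcd(a, b)$. This establishes the first statement of the lemma directly, with no case distinction between $n \geq 0$ and $n < 0$ (the extended backward sequence satisfies the same recurrence).

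For the ``in particular'' clause, I would proceed by induction. For $n \geq 0$, the base case $n = 0$ is trivial, and the inductive step $\gcd(G_0, G_1) = \gcd(G_n, G_{n+1}) = \gcd(G_{n+1}, G_{n+2})$ is exactly what we just proved. For $n < 0$, one uses the same identity in the reverse direction, inducting on $|n|$: from $\gcd(G_n, G_{n+1}) = \gcd(G_{n-1}, G_n)$ applied at $n = 0$, then $n = -1$, and so on. Chaining these equalities yields $\gcd(G_0, G_1) = \gcd(G_n, G_{n+1})$ for every integer $n$.

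There is essentially no obstacle here; the only mild subtlety is making sure the statement makes sense for negative indices, which is resolved by noting that the recurrence can be rearranged to define $G_{n-2} = G_n - G_{n-1}$ uniquely from $G_{n-1}$ and $G_n$, so the bi-infinite Gibonacci sequence is well defined and satisfies $G_{n+2} = G_{n+1} + G_n$ for all $n \in \mathbb{Z}$. Everything else is a one-line application of $\gcd(a, a+b) = \gcd(a, b)$.
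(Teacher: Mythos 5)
Your proof is correct and complete: the identity $\gcd(a, a+b) = \gcd(a,b)$ applied to the recurrence, together with the chaining/induction in both directions, is exactly the standard argument, and your care with negative indices (rearranging the recurrence to define the bi-infinite sequence) is a welcome touch. The paper itself offers no proof here, deferring instead to Lemma~18 of Guyer--Mbirika, whose argument is essentially the one you have written out, so your self-contained version matches the intended reasoning.
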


\begin{proof}
See Lemma~18 of Guyer-Mbirika~\cite{Guyer_Mbirika2021}.
\end{proof}

\begin{theorem}\label{thm:relatively_prime_initial_values_only}
Fix $G_0, G_1 \in \mathbb{Z}$ and set $d := \gcd(G_0,G_1)$. Consider the two Gibonacci sequences $(G_n)_{n\geq 0}$ and $(G_n^\prime)_{n\geq 0}$, where $(G_n^\prime)_{n=0}^\infty$ is generated by the relatively prime initial conditions $G_0^\prime = \frac{G_0}{d}$ and $G_1^\prime = \frac{G_1}{d}$. Then the following equality holds:
$$ \GibSumSquared = d^2 \cdot \GibSumPrimeSquared.$$
\end{theorem}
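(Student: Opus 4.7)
The plan is to exploit the fact that the Gibonacci recurrence is linear and homogeneous, so that scaling the initial conditions by $d$ scales every term of the sequence by $d$. First I would prove by a one-line induction on $n$ (base cases $G_0 = d G_0'$ and $G_1 = d G_1'$, inductive step from $G_n = G_{n-1} + G_{n-2}$) that $G_n = d \cdot G_n'$ for every $n \geq 0$. Squaring then gives $G_{n+i}^2 = d^2 (G_{n+i}')^2$ for all $n \geq 0$ and $1 \leq i \leq k$, which in turn yields
\[
\sum_{i=1}^k G_{n+i}^2 \;=\; d^2 \sum_{i=1}^k (G_{n+i}')^2
\]
for every $n \geq 0$.

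At this point I would invoke the elementary identity $\gcd(c a_0, c a_1, c a_2, \ldots) = c \cdot \gcd(a_0, a_1, a_2, \ldots)$ for a nonzero integer $c$, applied with $c = d^2$ and $a_n = \sum_{i=1}^k (G_{n+i}')^2$, to conclude immediately that $\GibSumSquared = d^2 \cdot \GibSumPrimeSquared$.

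A second, equally quick route would be to bypass the infinite sequence entirely and apply Theorem~\ref{thm:Simple_GibSum_formula} directly: since $G_n = d G_n'$ for all $n$, each of the three arguments $G_k G_{k+1} - G_0 G_1$, $G_{k+1}^2 - G_1^2$, and $G_{k+2}^2 - G_2^2$ equals $d^2$ times its primed counterpart, so the gcd on the right-hand side of the formula pulls the common factor $d^2$ outside.

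I do not anticipate any real obstacle in this proof; it is essentially a homogeneity observation. The only minor point worth flagging is that although the statement does not require $\gcd(G_0', G_1') = 1$ for the identity itself, this relative-primality (which holds by construction) is precisely what makes the theorem justify Convention~\ref{conv:relatively_prime_initial_values}: any $\GibSumSquared$ is a perfect-square multiple of a $\GibSumPrimeSquared$ value with coprime initial conditions, so no information is lost by restricting attention to that case.
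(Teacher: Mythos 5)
Your proposal is correct, and in fact it contains two valid arguments. Your second route --- observe $G_n = dG_n'$ for all $n$, apply Theorem~\ref{thm:Simple_GibSum_formula}, note each of the three arguments is $d^2$ times its primed counterpart, and pull $d^2$ out of the gcd --- is essentially the paper's own proof; the paper phrases it as ``$d$ divides every term of $(G_n)_{n\geq 0}$'' and then identifies $\gcd\bigl(\frac{G_k G_{k+1} - G_0 G_1}{d^2}, \frac{G_{k+1}^2 - G_1^2}{d^2}, \frac{G_{k+2}^2 - G_2^2}{d^2}\bigr)$ with $\GibSumPrimeSquared$ via a second application of Theorem~\ref{thm:Simple_GibSum_formula}. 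Your first route is genuinely different and arguably cleaner: by proving $G_n = dG_n'$ by induction and applying the homogeneity of gcd directly to the infinite sequence of sums $\sum_{i=1}^k G_{n+i}^2 = d^2 \sum_{i=1}^k (G_{n+i}')^2$, you bypass Theorem~\ref{thm:Simple_GibSum_formula} entirely, which means the scaling identity does not depend on the three-term gcd formula at all. What the paper's approach buys is only economy of exposition (the formula is already in hand at that point in the text); what your first route buys is logical independence and immediate generalizability to the $n$th-power version $\mathcal{G}_{G_0,G_1}^n\!(k) = d^n \cdot \mathcal{G}_{G_0',G_1'}^n\!(k)$ raised in the open questions, since it never uses anything specific to squares. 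Your closing remark about why relative primality of $G_0', G_1'$ is the point of the theorem is also accurate.
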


\begin{proof}
Set $d \colonequals \gcd(G_0,G_1)$. By Lemma~\ref{lem:Consec_Gib_GCD}, we have $\gcd(G_{k+1},G_{k+2}) = \gcd(G_0,G_1) = d$ for all $k \in \mathbb{Z}$. By Theorem~\ref{thm:Simple_GibSum_formula}, we have $\GibSumSquared = \gcd\left(G_k G_{k+1} - G_0 G_1,\; G_{k+1}^2 - G_1^2,\; G_{k+2}^2 - G_2^2\right)$. Moreover, since $d$ divides $G_0$ and $G_1$, then $d$ divides every term in the sequence $\GibSeq$. In particular, $\frac{G_k G_{k+1} - G_0 G_1}{d^2}$, $\frac{G_{k+1}^2 - G_1^2}{d^2}$, and $\frac{G_{k+2}^2 - G_2^2}{d^2}$ are integers. Observe the sequence of equalities
\begin{align*}
    \GibSumSquared &= \gcd\left(G_k G_{k+1} - G_0 G_1,\; G_{k+1}^2 - G_1^2,\; G_{k+2}^2 - G_2^2\right)\\
    &=\gcd\left(d^2 \cdot  \frac{G_k G_{k+1} - G_0 G_1}{d^2},\; d^2 \cdot \frac{G_{k+1}^2 - G_1^2}{d^2},\; d^2 \cdot \frac{G_{k+2}^2 - G_2^2}{d^2}\right)\\
    &= d^2 \cdot \gcd\left(\frac{G_k G_{k+1} - G_0 G_1}{d^2},\; \frac{G_{k+1}^2 - G_1^2}{d^2},\; \frac{G_{k+2}^2 - G_2^2}{d^2}\right).
\end{align*}
However, by Theorem~\ref{thm:Simple_GibSum_formula}, the value $\gcd\left(\frac{G_k G_{k+1} - G_0 G_1}{d^2},\; \frac{G_{k+1}^2 - G_1^2}{d^2},\; \frac{G_{k+2}^2 - G_2^2}{d^2}\right)$ is the GCD of the sum of $k$ consecutive squares of Gibonacci numbers in the new sequence $(G_n^\prime)_{n=0}^\infty$ generated by the initial values $G_0^\prime = \frac{G_0}{d}$ and $G_1^\prime = \frac{G_1}{d}$. Clearly $G_0^\prime$ and $G_1^\prime$ are relatively prime. In particular, we have $\GibSumSquared = d^2 \cdot \GibSumPrimeSquared$, as desired.
\end{proof}

\boitegrise{
\begin{convention}\label{conv:use_only_relatively_prime_initial_values}
In order to give a complete classification of the GCD of every sum of $k$ consecutive squares of Gibonacci numbers, as a consequence of Theorem~\ref{thm:relatively_prime_initial_values_only}, we need only to consider Gibonacci sequences with relatively prime initial values.
\vspace{-.2in}
\end{convention}}{0.9\textwidth}


\subsection{Simplified formulas for \texorpdfstring{$\GibSumSquared$}{curly G squared} when \texorpdfstring{$k$}{k} is even versus odd}\label{subsec:closed_form_for_curly_G}

In this subsection, we reveal that the formula for $\GibSumSquared$ given in Theorem~\ref{thm:Simple_GibSum_formula} in the previous subsection can be simplified further if we consider the two cases of the parity of $k$ as follows:
$$ \GibSumSquared = \begin{cases}
  \gcd\left(G_{k+1}^2 - G_1^2,\; G_{k+2}^2 - G_2^2\right),  & \text{if $k$ is even}, \\
  \gcd\left(2 \mu,\; G_{k+1}^2 - G_1^2,\; G_{k+2}^2 - G_2^2\right), & \text{ if $k$ is odd},
\end{cases} $$
where the characteristic $\mu$ is defined as $G_1^2 - G_0 G_1 - G_0^2$ (recall Definition~\ref{def:characteristic_mu_parameter} and Remark~\ref{rem:differing_mu_notations}). Pivotal in the proof of the formulas above is the use of the following lemma known as Cassini's identity for generalized Fibonacci sequences.

\begin{lemma}[Generalized Cassini's Identity]\label{lem:generalized_Cassini} 
Let $n \geq 1$ be given. Then the following identity holds: $G_{n+1} G_{n-1} - G_n^2 = (-1)^n \cdot \mu$, where $\mu = G_1^2 - G_0 G_1 - G_0^2$. 
\end{lemma}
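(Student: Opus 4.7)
The plan is to prove the identity by straightforward induction on $n \geq 1$, using only the Gibonacci recurrence $G_n = G_{n-1} + G_{n-2}$ and the definition $\mu = G_1^2 - G_0 G_1 - G_0^2$; no appeal to Binet's formula or matrix determinants is required.

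For the base case $n = 1$, I would simply expand using $G_2 = G_0 + G_1$:
\[
G_2 G_0 - G_1^2 \;=\; (G_0 + G_1) G_0 - G_1^2 \;=\; G_0^2 + G_0 G_1 - G_1^2 \;=\; -\mu \;=\; (-1)^1 \mu.
\]
For the inductive step, assume the identity holds for some $n \geq 1$, so that $G_{n+1} G_{n-1} - G_n^2 = (-1)^n \mu$. I would apply $G_{n+2} = G_{n+1} + G_n$ and the recurrence $G_{n+1} - G_n = G_{n-1}$ to rewrite
\[
G_{n+2} G_n - G_{n+1}^2 \;=\; G_n^2 + G_{n+1} G_n - G_{n+1}^2 \;=\; G_n^2 - G_{n+1} G_{n-1} \;=\; -\bigl(G_{n+1} G_{n-1} - G_n^2\bigr) \;=\; (-1)^{n+1} \mu,
\]
where the last equality invokes the inductive hypothesis. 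This closes the induction.

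There is no substantive obstacle: the argument is mechanical, and the only care needed is to verify that a single sign flip per step produces the alternating factor $(-1)^n$, and that $G_{n-1}$ is defined at the base of the induction (which is why we begin at $n = 1$). As an alternative approach one could instead apply Lemma~\ref{lem:Gibonacci_Vorobiev_identity} to write $G_n = G_1 F_n + G_0 F_{n-1}$ and reduce the identity to the classical Cassini identity $F_{n+1}F_{n-1} - F_n^2 = (-1)^n$ together with one auxiliary Fibonacci identity of the same shape; but that route is longer and less self-contained than the direct induction sketched above.
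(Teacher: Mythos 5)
Your induction is correct, and both steps check out: the base case $G_2G_0 - G_1^2 = G_0^2 + G_0G_1 - G_1^2 = -\mu$ is right, and the inductive step correctly uses $G_{n+1} - G_n = G_{n-1}$ to produce exactly one sign flip, giving $G_{n+2}G_n - G_{n+1}^2 = -(G_{n+1}G_{n-1} - G_n^2) = (-1)^{n+1}\mu$. The paper does not actually prove this lemma; it simply cites Identity~(28) of Vajda, so your argument is a self-contained replacement for that citation rather than a divergence from the paper's method. The induction you give is the standard one, and your remark about the alternative route through Lemma~\ref{lem:Gibonacci_Vorobiev_identity} and the classical Fibonacci Cassini identity is also viable but, as you say, longer. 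No gaps.
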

\begin{proof}
See Identity~(28) of Vajda~\cite[p.~32]{Vajda1989}.
\end{proof}

\begin{lemma}\label{lem:D_function_invariance}
Let $\Dfunc$ denote the value $G_{n+1}^2 - G_n G_{n+1} - G_n^2$. Then the following holds:
\begin{align}
    \Dfunc = (-1)^n \cdot \mu \label{eq:Dfunc_invariance}
\end{align}
for all $n \geq 0$, where $\mu = \Dinitial$ (i.e., the characteristic $G_1^2 - G_0 G_1 - G_0^2$). In particular, we have $|\Dfunc| = |\mu|$ for all $n \geq 0$.
\end{lemma}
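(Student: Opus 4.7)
The plan is to reduce the identity to the already-established Generalized Cassini's Identity (Lemma~\ref{lem:generalized_Cassini}) by a simple algebraic rewrite of $\Dfunc$. First I would dispose of the base case $n=0$ by direct substitution: $D_{G_0,G_1}=G_1^2 - G_0 G_1 - G_0^2$, which is exactly $\mu$ by Definition~\ref{def:characteristic_mu_parameter}, and this equals $(-1)^0 \mu$ as required.

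For $n \geq 1$, the key observation is that the defining recurrence $G_{n+1}=G_n+G_{n-1}$ can be rearranged as $G_{n-1}=G_{n+1}-G_n$. Multiplying by $G_{n+1}$ gives $G_{n+1}G_{n-1}=G_{n+1}^2 - G_n G_{n+1}$, and subtracting $G_n^2$ from both sides yields the pivotal identity
\[
\Dfunc \;=\; G_{n+1}^2 - G_n G_{n+1} - G_n^2 \;=\; G_{n+1}G_{n-1} - G_n^2.
\]
The right-hand side is precisely the quantity in Lemma~\ref{lem:generalized_Cassini}, which evaluates to $(-1)^n \mu$. This completes the proof of \eqref{eq:Dfunc_invariance} for all $n \geq 0$. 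The final assertion $|\Dfunc|=|\mu|$ is then immediate upon taking absolute values.

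There is no serious obstacle: the entire content is spotting the rewrite $G_{n+1}G_{n-1} = G_{n+1}^2 - G_nG_{n+1}$. If one preferred to avoid invoking Cassini, an alternative route would be induction on $n$, where the inductive step amounts to checking by expansion that $D_{G_{n+1},G_{n+2}} = -\Dfunc$ (using $G_{n+2}=G_{n+1}+G_n$); this reproduces the sign alternation $(-1)^n \mu$ from the base case. Either route is routine, and the first is preferable because it leverages the lemma already cited in the paper.
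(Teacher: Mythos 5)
Your proof is correct. Note, though, that the paper does not actually prove this lemma itself: its ``proof'' is a one-line citation to Lemma~34 of Guyer--Mbirika, so your argument is a genuinely different (and more self-contained) route. The rewrite $\Dfunc = G_{n+1}(G_{n+1}-G_n) - G_n^2 = G_{n+1}G_{n-1} - G_n^2$ is valid for $n \geq 1$, and at that point the Generalized Cassini Identity (Lemma~\ref{lem:generalized_Cassini}), which the paper states immediately beforehand and which is only asserted for $n \geq 1$, gives $(-1)^n\mu$ directly; you correctly handle $n=0$ separately by observing $D_{G_0,G_1} = \mu$ by definition. What your approach buys is that the lemma becomes a two-line consequence of a result already quoted in the paper, rather than an external reference the reader must chase; what the paper's citation buys is brevity and consistency with its practice of outsourcing standard Gibonacci identities to the literature. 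Your fallback induction sketch (showing $D_{G_{n+1},G_{n+2}} = -\Dfunc$ by expanding with $G_{n+2}=G_{n+1}+G_n$) is also sound, but as you say, the Cassini route is preferable here.
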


\begin{proof}
See Lemma~34 of Guyer-Mbirika~\cite{Guyer_Mbirika2021}.
\end{proof}

\begin{lemma}\label{lem:zero_versus_4mu}
For all $k \geq 1$, we have
$$ \left(G_k^2 - G_0^2\right) - 3\left(G_{k+1}^2 - G_1^2\right) + \left(G_{k+2}^2 - G_2^2\right) = \begin{cases}
  0,  & \text{if $k$ is even}, \\
  4 \mu, & \text{ if $k$ is odd},
\end{cases}$$
where $\mu = G_1^2 - G_0 G_1 - G_0^2$.
\end{lemma}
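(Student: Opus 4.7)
The plan is to expand both sides using the recurrence relation $G_{n+2} = G_{n+1} + G_n$ and then recognize the resulting expression as a scaled version of the $D$-function introduced in Lemma~\ref{lem:D_function_invariance}. Since the entire left-hand side is parity-dependent through the factor $(-1)^k$ appearing in that lemma, this should yield both cases simultaneously.

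Concretely, I would first denote the left-hand side by $A_k$ and use $G_{k+2} = G_{k+1} + G_k$ together with $G_2 = G_1 + G_0$ to eliminate all subscripts above $k+1$ and above $1$. Expanding $G_{k+2}^2 = G_{k+1}^2 + 2G_{k+1}G_k + G_k^2$ and similarly $G_2^2 = G_1^2 + 2G_0 G_1 + G_0^2$, the combination $G_k^2 - 3G_{k+1}^2 + G_{k+2}^2$ collapses to $2G_k^2 + 2G_k G_{k+1} - 2G_{k+1}^2$, and $G_0^2 - 3G_1^2 + G_2^2$ collapses to $2G_0^2 + 2G_0 G_1 - 2G_1^2$. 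After subtracting, one obtains
\[
 A_k \;=\; -2\bigl(G_{k+1}^2 - G_k G_{k+1} - G_k^2\bigr) \;+\; 2\bigl(G_1^2 - G_0 G_1 - G_0^2\bigr) \;=\; -2\,\Dfunc \;+\; 2\mu.
\]

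The next step is to invoke Lemma~\ref{lem:D_function_invariance}, which gives $\Dfunc = (-1)^k \mu$. Substituting this in yields $A_k = 2\mu\bigl(1 - (-1)^k\bigr)$, which equals $0$ when $k$ is even and $4\mu$ when $k$ is odd, precisely matching the two cases in the statement.

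I do not anticipate a real obstacle here; the identity is essentially an algebraic rearrangement that reveals the $D$-function, and once that is recognized, the parity dichotomy falls out immediately from the generalized Cassini identity (Lemma~\ref{lem:generalized_Cassini}) packaged as Lemma~\ref{lem:D_function_invariance}. The only mild care needed is in the bookkeeping of the $G_2$ term on the base side, so that the $\mu$ appearing there really does match the $\mu$ from Definition~\ref{def:characteristic_mu_parameter}; this is immediate from the definition.
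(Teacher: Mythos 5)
Your proof is correct, and it is essentially the paper's own argument: both reduce the left-hand side to a difference of two values of the $D$-function and then apply Lemma~\ref{lem:D_function_invariance} to get the parity dichotomy from the factor $(-1)^n$. The only cosmetic difference is that you eliminate the top indices (writing $G_{k+2}=G_{k+1}+G_k$ and $G_2=G_1+G_0$, landing on $-2D_{G_k,G_{k+1}}+2D_{G_0,G_1}$), while the paper eliminates the bottom ones and lands on $2D_{G_{k+1},G_{k+2}}-2D_{G_1,G_2}$; both give $2\left(1-(-1)^k\right)\mu$.
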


\begin{proof}
For ease of notation, set $M_i \colonequals G_{k+i}^2 - G_i^2$. Hence it suffices to show the following:
$$ M_0 - 3 M_1 + M_2 = \begin{cases}
  0,  & \text{if $k$ is even}, \\
  4 \mu, & \text{ if $k$ is odd}.
\end{cases}$$
Since $G_k = G_{k+2} - G_{k+1}$ and $G_0 = G_2 - G_1$, we have the sequence of equalities
\begin{align*}
    M_0 &= G_k^2 - G_0^2\\
    &= \left( G_{k+2} - G_{k+1} \right)^2 - \left( G_2 - G_1 \right)^2\\
    &= \left( G_{k+2}^2 - 2 G_{k+1} G_{k+2} + G_{k+1}^2 \right) - \left( G_2^2 - 2 G_1 G_2 + G_1^2 \right)\\
    &= \left( G_{k+2}^2 - G_2^2 \right) + \left( G_{k+1}^2 - G_1^2 \right) - 2 \left( G_{k+1} G_{k+2} - G_1 G_2 \right)\\
    &= M_2 + M_1 - 2 \left( G_{k+1} G_{k+2} - G_1 G_2 \right).
\end{align*}
Thus it follows that
\begin{align}
    M_0 - 3 M_1 + M_2 &= 2 M_2 - 2 M_1 - 2 \left( G_{k+1} G_{k+2} - G_1 G_2 \right). \label{eq:zero_versus_4mu_equation}
\end{align}
Recalling from Lemma~\ref{lem:zero_versus_4mu} that $\Dfunc = G_{n+1}^2 - G_n G_{n+1} - G_n^2$, the right side of Equation~\eqref{eq:zero_versus_4mu_equation} decomposes as
\begin{align*}
     2 M_2 - &2 M_1 - 2 \left( G_{k+1} G_{k+2} - G_1 G_2 \right)\\
     &= 2 \left( G_{k+2}^2 - G_2^2 \right) - 2 \left( G_{k+1}^2 - G_1^2 \right) - 2 \left( G_{k+1} G_{k+2} - G_1 G_2 \right)\\
     &= 2 \left( G_{k+2}^2 - G_{k+1} G_{k+2} -  G_{k+1}^2 \right) - 2 \left( G_2^2 - G_1 G_2 - G_1^2 \right)\\
     &= 2 D_{G_{k+1},G_{k+2}} - 2 D_{G_1,G_2}\\
     &= 2 \left( (-1)^{k+1} \cdot \mu \right) - 2 \left( (-1)^1 \cdot \mu \right) &\text{by Lemma~\ref{lem:zero_versus_4mu}}\\
     &= 2 \left( (-1)^{k+1} + 1 \right)\cdot \mu\\
     &= \begin{cases}
  0,  & \text{if $k$ is even}, \\
  4 \mu, & \text{ if $k$ is odd}.
\end{cases}
\end{align*}
Combining the latter equality with Equation~\eqref{eq:zero_versus_4mu_equation}, the result follows.
\end{proof}

We are now ready to state and prove the main theorem of this subsection.

\begin{theorem}\label{thm:Simpler_GibSum_formula_for_k_even_versus_odd}
For all $k \geq 1$, we have
$$ \GibSumSquared = \begin{cases}
  \gcd\left(G_{k+1}^2 - G_1^2,\; G_{k+2}^2 - G_2^2\right),  & \text{if $k$ is even}, \\
  \gcd\left(2 \mu,\; G_{k+1}^2 - G_1^2,\; G_{k+2}^2 - G_2^2\right), & \text{ if $k$ is odd},
\end{cases} $$
where $\mu = G_1^2 - G_0 G_1 - G_0^2$.
\end{theorem}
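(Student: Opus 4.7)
The plan is to start from Theorem~\ref{thm:Simple_GibSum_formula}, which already gives us
$$\GibSumSquared \;=\; \gcd\left(S_0,\,H_0,\,H_1\right),$$
where (using the notation from the proof of Theorem~\ref{thm:Simple_GibSum_formula}) $S_0 = G_k G_{k+1} - G_0 G_1$, $H_0 = G_{k+1}^2 - G_1^2$, and $H_1 = G_{k+2}^2 - G_2^2$. The entire task is to show that the generator $S_0$ can be eliminated (when $k$ is even) or replaced by $2\mu$ (when $k$ is odd). I would do this by producing an explicit integer linear combination expressing $2S_0$ in terms of $H_0$, $H_1$, and the quantity $G_k^2 - G_0^2$, and then invoking Lemma~\ref{lem:zero_versus_4mu} to handle that last quantity.

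First I would expand $G_{k+2}^2 = (G_{k+1}+G_k)^2$ and $G_2^2 = (G_1+G_0)^2$ to obtain the elementary identities
$$2 G_k G_{k+1} = G_{k+2}^2 - G_{k+1}^2 - G_k^2 \quad\text{and}\quad 2 G_0 G_1 = G_2^2 - G_1^2 - G_0^2.$$
Subtracting yields the key relation
$$2 S_0 \;=\; H_1 - H_0 - (G_k^2 - G_0^2).$$
Next I would apply Lemma~\ref{lem:zero_versus_4mu}, which states that $(G_k^2 - G_0^2) - 3 H_0 + H_1$ equals $0$ if $k$ is even and $4\mu$ if $k$ is odd. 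Substituting into the displayed equation gives, after a one-line simplification,
$$S_0 \;=\; H_1 - 2 H_0 \qquad (k \text{ even}), \qquad S_0 \;=\; H_1 - 2 H_0 - 2\mu \qquad (k \text{ odd}).$$

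From here the conclusion is immediate. In the even case, $S_0$ lies in the ideal generated by $H_0$ and $H_1$, so $\gcd(S_0,H_0,H_1) = \gcd(H_0,H_1)$. In the odd case, the same equation rearranges to $2\mu = H_1 - 2 H_0 - S_0$, so $2\mu$ lies in the ideal $(S_0,H_0,H_1)$, and conversely $S_0 = H_1 - 2 H_0 - 2\mu$ lies in $(2\mu, H_0, H_1)$; the two ideals therefore coincide, yielding $\gcd(S_0,H_0,H_1) = \gcd(2\mu, H_0, H_1)$. There is no real obstacle here beyond bookkeeping: the work is essentially a two-line algebraic reduction of $2 S_0$ followed by a direct substitution from Lemma~\ref{lem:zero_versus_4mu}, so the ``hard part'' was already absorbed into that auxiliary lemma.
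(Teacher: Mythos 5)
Your proof is correct and follows essentially the same strategy as the paper: start from Theorem~\ref{thm:Simple_GibSum_formula} and use Lemma~\ref{lem:zero_versus_4mu} to eliminate $G_k^2-G_0^2$ and reduce the generator $S_0$ modulo the ideal $(H_0,H_1)$ (resp.\ $(2\mu,H_0,H_1)$). The only difference is in how the key relation is obtained: the paper derives $S_0 = H_0 - (G_k^2-G_0^2) + \left((-1)^{k+1}+1\right)\mu$ via the generalized Cassini identity (Lemma~\ref{lem:generalized_Cassini}), whereas you get $2S_0 = H_1 - H_0 - (G_k^2-G_0^2)$ by direct expansion of $(G_{k+1}+G_k)^2$ and $(G_1+G_0)^2$; your route avoids Cassini at the cost of a harmless factor of $2$ that cancels after substitution, and both reductions land on the same identities $S_0 = H_1 - 2H_0$ ($k$ even) and $S_0 = H_1 - 2H_0 - 2\mu$ ($k$ odd).
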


\begin{proof}
For ease of notation, set $M_i \colonequals G_{k+i}^2 - G_i^2$. Hence it suffices to show that if $k$ is even (respectively, odd), then $\GibSumSquared = \gcd(M_1, M_2)$ (respectively, $\GibSumSquared = \gcd(2\mu, M_1, M_2)$). To that end, observe the sequence of equalities
\begin{align*}
    G_k G_{k+1} - G_0 G_1 &= G_k (G_{k+2} - G_k) - G_0 (G_2 - G_0)\\
    &= G_k G_{k+2} - G_k^2 - G_0 G_2 + G_0^2\\
    &= (G_{k+1}^2 + (-1)^{k+1} \mu) - G_k^2 - (G_1^2 + (-1)^1 \mu) + G_0^2 &\text{by Lemma~\ref{lem:generalized_Cassini}}\\
    &= (G_{k+1}^2 - G_1^2) - (G_{k}^2 - G_0^2) + \left( (-1)^{k+1} + 1 \right)\cdot \mu\\
    &= M_1 - M_0 + \left( (-1)^{k+1} + 1 \right)\cdot \mu.
\end{align*}
So it follows that
\begin{subnumcases}{G_k G_{k+1} - G_0 G_1=}
    M_1 - M_0, & \text{if $k$ is even}, \label{eq:simpler_formula_k_even}
   \\
   M_1 - M_0 + 2\mu, & \text{if $k$ is odd}. \label{eq:simpler_formula_k_odd}
\end{subnumcases}
By Theorem~\ref{thm:Simple_GibSum_formula}, if $k$ is even, then we have
\begin{align*}
    \GibSumSquared &= \gcd\left(G_k G_{k+1} - G_0 G_1,\; G_{k+1}^2 - G_1^2,\; G_{k+2}^2 - G_2^2\right)\\
    &= \gcd\left(G_k G_{k+1} - G_0 G_1,M_1,M_2\right)\\
    &= \gcd(M_1 - M_0, M_1, M_2) & \text{by Equation~\eqref{eq:simpler_formula_k_even}}\\
    &= \gcd(M_0, M_1, M_2)\\
    &= \gcd(3 M_1 - M_2, M_1, M_2) &\text{by Lemma~\ref{lem:zero_versus_4mu}}\\
    &= \gcd(M_1, M_2),
\end{align*}
as desired. On the other hand, if $k$ is odd, then we have
\begin{align*}
    \GibSumSquared &= \gcd\left(G_k G_{k+1} - G_0 G_1,\; G_{k+1}^2 - G_1^2,\; G_{k+2}^2 - G_2^2\right)\\
    &= \gcd\left(G_k G_{k+1} - G_0 G_1,M_1,M_2\right)\\
    &= \gcd(M_1 - M_0 + 2\mu, M_1, M_2) & \text{by Equation~\eqref{eq:simpler_formula_k_odd}}\\
    &= \gcd(-M_0 + 2\mu, M_1, M_2)\\
    &= \gcd(-3 M_1 + M_2 - 2\mu, M_1, M_2) &\text{by Lemma~\ref{lem:zero_versus_4mu}}\\
    &= \gcd(-2\mu, M_1, M_2)\\
    &= \gcd(2\mu, M_1, M_2),
\end{align*}
as desired.
\end{proof}


\section{Closed forms in the generalized Fibonacci setting}\label{sec:closed_form_in_Gibonacci_setting}

In this section, we prove the following closed forms on the GCD of the sum of $k$ consecutive squares of Gibonacci numbers, noting $g_k \colonequals \gcd\left( G_{k+1}^2 - G_1^2, \; G_{k+2}^2 - G_2^2 \right)$:
\begingroup
\renewcommand{\arraystretch}{1.1}
\begin{center}
    \begin{tabular}{|c||c|c|}
    \hline
         $k$ & $\GibSumSquared$ & Proof in this paper \\ \hline\hline
         $k$ even and $5\nmid\mu$ & $F_k$ & Theorem~\ref{thm:closed_form_for_GibSumSquared_when_k_even}\\ \hline
         $k$ even and $5\mid\mu$ & $5F_k$ & Theorem~\ref{thm:closed_form_for_GibSumSquared_when_k_even}\\ \hline
         $k \equiv 3 \pmod{6}$ & $\gcd(2\mu, g_k)$ & Theorem~\ref{thm:Simpler_GibSum_formula_for_k_even_versus_odd}\\ \hline
         $k \equiv 1,5 \pmod{6}$ & $\gcd(2\mu, g_k)$ & Theorem~\ref{thm:Simpler_GibSum_formula_for_k_even_versus_odd}\\ \hline
    \end{tabular}
\end{center}
\endgroup


\subsection{Closed form for \texorpdfstring{$\GibSumSquared$}{curly G squared} when \texorpdfstring{$k$}{k} is even}

\begin{lemma}\label{lem:Vorobiev_expression_for_G_i}
For all $i \in \mathbb{Z}$, we have
$$ G_i = G_0 F_{i-1} + G_1 F_i. $$
\end{lemma}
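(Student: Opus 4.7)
The plan is to recognize this identity as an immediate specialization of Lemma~\ref{lem:Gibonacci_Vorobiev_identity}, which states that $G_{m+n} = G_{m+1} F_n + G_m F_{n-1}$ for all $m,n \in \mathbb{Z}$. Setting $m = 0$ and $n = i$ yields $G_i = G_1 F_i + G_0 F_{i-1}$, which after reordering the summands is exactly the desired expression. So the proof should be a one-line application of the previous lemma.

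If one preferred a self-contained argument (for example, to avoid relying on a result whose own proof is cited to Vajda), the natural alternative would be induction on $i$ for $i \geq 0$, using the two base cases $i = 0$ (where $G_0 F_{-1} + G_1 F_0 = G_0 \cdot 1 + G_1 \cdot 0 = G_0$) and $i = 1$ (where $G_0 F_0 + G_1 F_1 = G_0 \cdot 0 + G_1 \cdot 1 = G_1$), and then the inductive step
\[
G_0 F_i + G_1 F_{i+1} = G_0(F_{i-1} + F_{i-2}) + G_1(F_i + F_{i-1}) = G_i + G_{i-1} = G_{i+1}
\]
by the Fibonacci and Gibonacci recurrences. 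For negative $i$, one extends via the backward recurrence $G_{i-1} = G_{i+1} - G_i$ (and likewise for Fibonacci), with the convention $F_{-n} = (-1)^{n+1} F_n$, and a parallel two-step induction handles $i < 0$.

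Given that Lemma~\ref{lem:Gibonacci_Vorobiev_identity} is already in hand, I would simply invoke it, so there is essentially no obstacle here. The main thing to be careful about is the bookkeeping of indices in the original lemma (which variable is the ``base point'' and which is the ``shift''), since swapping them would give a different-looking but equivalent identity; one must match the correct substitution $m = 0$, $n = i$ to recover the stated form.
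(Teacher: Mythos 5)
Your proposal is correct and takes exactly the paper's approach: the paper also proves this as a one-line specialization of Lemma~\ref{lem:Gibonacci_Vorobiev_identity}. In fact your bookkeeping ($m=0$, $n=i$) is the right substitution, whereas the paper's proof states ``$m\colonequals i$ and $n\colonequals 0$,'' which as written only yields the trivial identity $G_i = G_{i+1}F_0 + G_iF_{-1}$ and is evidently a typo for the substitution you used.
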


\begin{proof}
This follows from Lemma~\ref{lem:Gibonacci_Vorobiev_identity}, if we set $m \colonequals i$ and $n \colonequals 0$.
\end{proof}

The following well-known identity is a generalization of Cassini's Identity (recall Lemma~\ref{lem:generalized_Cassini}) in the Fibonacci setting and will be useful in proving Lemma~\ref{lem:difference_of_two_Fibonacci_squares}.

\begin{lemma}[Catalan's Identity]\label{lem:Catalan_identity}
For $n,r \in \mathbb{Z}$ with $n \geq r$, we have
$$F_n^2-F_{n-r}F_{n+r}=(-1)^{n-r}F_r^2.$$
\end{lemma}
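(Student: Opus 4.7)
The plan is to prove Catalan's identity via the Binet closed form, which is the cleanest classical route and avoids any further Gibonacci machinery. Specifically, I would invoke $F_n = (\phi^n - \psi^n)/\sqrt{5}$ with $\phi = (1+\sqrt{5})/2$ and $\psi = (1-\sqrt{5})/2$, and use the two facts $\phi\psi = -1$ and $(\phi^r - \psi^r)^2 = 5 F_r^2$.

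First I would expand both terms. A short computation gives $5 F_n^2 = \phi^{2n} + \psi^{2n} - 2(-1)^n$ and, by distributing and collecting the cross terms via $\phi^{n-r}\psi^{n+r} + \psi^{n-r}\phi^{n+r} = (\phi\psi)^{n-r}(\phi^{2r} + \psi^{2r}) = (-1)^{n-r}(\phi^{2r} + \psi^{2r})$, also $5 F_{n-r} F_{n+r} = \phi^{2n} + \psi^{2n} - (-1)^{n-r}(\phi^{2r} + \psi^{2r})$. Subtracting, the $\phi^{2n} + \psi^{2n}$ cancels and I would obtain $5(F_n^2 - F_{n-r} F_{n+r}) = (-1)^{n-r}(\phi^{2r} + \psi^{2r}) - 2(-1)^n$. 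Finally, rewriting $\phi^{2r} + \psi^{2r} = (\phi^r - \psi^r)^2 + 2(\phi\psi)^r = 5 F_r^2 + 2(-1)^r$ and using $(-1)^{n-r}\cdot 2(-1)^r = 2(-1)^n$ to cancel the residual, I arrive at $5 F_r^2 (-1)^{n-r}$, yielding the identity after dividing by $5$.

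An alternative route more in keeping with the paper's toolkit avoids Binet entirely. Apply Lemma~\ref{lem:Gibonacci_Vorobiev_identity} twice in the Fibonacci setting, with $m = n-r$ and exponents $r$ and $2r$ respectively, to write $F_n = F_{n-r+1} F_r + F_{n-r} F_{r-1}$ and $F_{n+r} = F_{n-r+1} F_{2r} + F_{n-r} F_{2r-1}$. Expanding $F_n^2 - F_{n-r} F_{n+r}$ and using the standard doubling identities $F_{2r} = F_r(F_{r+1} + F_{r-1})$ and $F_{2r-1} = F_r^2 + F_{r-1}^2$, everything collapses to $F_r^2 \cdot D_{F_{n-r}, F_{n-r+1}}$. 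By Lemma~\ref{lem:D_function_invariance} applied in the Fibonacci case (where $\mu = 1$), this $D$-value equals $(-1)^{n-r}$, completing the proof.

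The main obstacle in either route is sign-bookkeeping rather than any real difficulty: in the Binet approach the $(-1)^{n-r+r} = (-1)^n$ cancellation must be tracked, and in the Vorobiev approach one has to notice the simplification $F_{r-1} - F_{r+1} = -F_r$ that exposes the final factor as a $D$-function. Since this is a century-old classical identity already recorded in Vajda, the most economical presentation in a paper of this style is simply to cite Vajda directly, as the authors do for the sibling results Lemma~\ref{lem:sum_of_first_Gib_numbers_squared}, Lemma~\ref{lem:Gibonacci_Vorobiev_identity}, and Lemma~\ref{lem:generalized_Cassini}.
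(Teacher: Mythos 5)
Your proposal is correct, but the paper itself does not prove this lemma at all: it simply cites Theorem~5.11 of Koshy (not Vajda, as you guessed, though the distinction is immaterial). Both of your routes check out. In the Binet computation the key cancellation $(-1)^{n-r}\cdot 2(-1)^r = 2(-1)^n$ is handled correctly, and the algebra in fact never uses the hypothesis $n \geq r$, so you get the identity for all integer indices under the standard extension $F_{-n} = (-1)^{n+1}F_n$. Your second route is the more interesting one in context: writing $a = F_{n-r}$, $b = F_{n-r+1}$, the expansion does collapse to $F_r^2\,(b^2 - ab - a^2) = F_r^2\cdot D_{F_{n-r},F_{n-r+1}} = (-1)^{n-r}F_r^2$ via Lemma~\ref{lem:D_function_invariance} with $\mu = 1$ (and the hypothesis $n \geq r$ is exactly what licenses applying that lemma, which is stated for nonnegative index). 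This second argument is self-contained within the paper's own toolkit and exhibits Catalan's identity as a Fibonacci specialization of the same $D$-function invariance the authors use elsewhere, which is arguably more illuminating than either the citation or the Binet calculation; its only cost is importing the two doubling identities $F_{2r} = F_r(F_{r+1}+F_{r-1})$ and $F_{2r-1} = F_r^2 + F_{r-1}^2$, which would themselves need a citation or a short proof.
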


\begin{proof}
See Theorem~5.11 of Koshy~\cite[p.~106]{Koshy2018}.
\end{proof}

The next lemma reveals the interesting fact that if $k$ is even, then $F_k$ divides the difference of the squares of any two Fibonacci numbers that are $k$ terms apart. We use this fact to prove the subsequent result, Lemma~\ref{lem:difference_of_two_Gibonacci_squares}, which states that if $k$ is even, then $F_k$ also divides the difference of the squares of any two Gibonacci numbers that are $k$ terms apart.

\begin{lemma}\label{lem:difference_of_two_Fibonacci_squares}
For $k \geq 0$ with $k$ even and $\ell \in \mathbb{Z}$, we have
$$ F_{k+\ell}^2 - F_\ell^2 = F_k F_{k+2\ell}. $$
\end{lemma}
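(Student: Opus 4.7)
The plan is to obtain the claim as a one-step corollary of Catalan's identity (Lemma~\ref{lem:Catalan_identity}). The strategy is to choose the substitution $(n,r)$ in the identity $F_n^2 - F_{n-r} F_{n+r} = (-1)^{n-r} F_r^2$ so that the product $F_{n-r} F_{n+r}$ matches $F_k F_{k+2\ell}$. This forces $\{n-r,\, n+r\} = \{k,\, k+2\ell\}$, and hence $n \colonequals k+\ell$ and $r \colonequals \ell$. With this substitution Catalan's identity reads
$$F_{k+\ell}^2 - F_k F_{k+2\ell} = (-1)^{k} F_\ell^2,$$
with no further manipulation required.

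The hypothesis that $k$ is even enters precisely through the sign on the right: since $(-1)^{k} = 1$, the right-hand side collapses to $F_\ell^2$, and rearranging yields $F_{k+\ell}^2 - F_\ell^2 = F_k F_{k+2\ell}$, as claimed. Were $k$ odd, the sign would flip and we would instead get $F_{k+\ell}^2 + F_\ell^2 = F_k F_{k+2\ell}$, which illustrates why the parity hypothesis is indispensable and is not a cosmetic assumption.

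The only mild technicality concerns the constraint $n \geq r$ in the stated form of Catalan's identity and the possibility of negative~$\ell$. When $\ell \geq 0$ one has $n - r = k \geq 0$, so the identity applies verbatim. For $\ell < 0$, I would invoke the standard extension of Fibonacci numbers to negative indices via $F_{-m} = (-1)^{m+1} F_m$, under which Catalan's identity continues to hold; alternatively, one can swap the roles of $n \pm r$ in the substitution above. In either case the one-line computation goes through unchanged. I foresee no genuine obstacle: once the correct substitution into Catalan is identified, the proof is essentially immediate.
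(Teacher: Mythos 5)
Your proof is correct and follows exactly the paper's own argument: the same substitution $n \colonequals k+\ell$, $r \colonequals \ell$ into Catalan's identity, with the evenness of $k$ killing the sign. Your added remark about extending to negative $\ell$ via $F_{-m}=(-1)^{m+1}F_m$ is a reasonable point the paper leaves implicit, but the core approach is identical.
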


\begin{proof}
By Catalan's Identity of Lemma~\ref{lem:Catalan_identity}, if we set $r \colonequals \ell$ and $n \colonequals k+\ell$, then it follows that
$$ F_{k+\ell}^2 - F_{(k+\ell)-\ell} F_{(k+\ell)+ \ell} = (-1)^{(k+\ell) - \ell} F_\ell^2. $$
Noting that $k$ is even, the latter equality reduces to $F_{k+\ell}^2 - F_k F_{k+2\ell} = F_\ell^2$, and the result follows.
\end{proof}

\begin{corollary}\label{cor:difference_of_two_Fibonacci_squares}
For $k \geq 0$ with $k$ even and $\ell \in \mathbb{Z}$, we have
$$ F_{k+\ell-1}^2 - F_{\ell-1}^2 = F_k F_{k+2\ell-2}. $$
\end{corollary}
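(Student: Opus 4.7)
The plan is to obtain this corollary as a direct application of Lemma~\ref{lem:difference_of_two_Fibonacci_squares} by making a simple index substitution. The lemma, as stated, asserts that for $k \geq 0$ with $k$ even and any integer $\ell$, the identity
$$ F_{k+\ell}^2 - F_\ell^2 = F_k F_{k+2\ell} $$
holds. Since the lemma is valid for every $\ell \in \mathbb{Z}$, I would substitute $\ell \mapsto \ell - 1$ (which is again an integer, so the hypothesis is preserved) and read off the result.

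Concretely, after substitution the left-hand side becomes $F_{k+(\ell-1)}^2 - F_{\ell-1}^2 = F_{k+\ell-1}^2 - F_{\ell-1}^2$, and the right-hand side becomes $F_k F_{k+2(\ell-1)} = F_k F_{k+2\ell-2}$, which is exactly the desired identity. Because the parity condition on $k$ is unchanged by the substitution (only $\ell$ is shifted), no additional verification is required.

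There is no genuine obstacle here; the work was done in Lemma~\ref{lem:difference_of_two_Fibonacci_squares}, which in turn rested on Catalan's Identity (Lemma~\ref{lem:Catalan_identity}). The only thing to check is that the reindexing is allowed for all integer $\ell$, which is immediate since the lemma places no restriction on $\ell$ beyond $\ell \in \mathbb{Z}$. Thus the proof reduces to one line: \emph{apply Lemma~\ref{lem:difference_of_two_Fibonacci_squares} with $\ell$ replaced by $\ell - 1$}.
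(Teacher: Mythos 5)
Your proposal is correct and is exactly the paper's own argument: the paper likewise proves the corollary by applying Lemma~\ref{lem:difference_of_two_Fibonacci_squares} with $\ell$ replaced by $\ell-1$. Nothing further is needed.
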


\begin{proof}
This follows immediately from Lemma~\ref{lem:difference_of_two_Fibonacci_squares} if we replace $\ell$ with $\ell-1$.
\end{proof}

\begin{lemma}\label{lem:Vajda_Identity_20a}
For $k,\ell \geq 0$ with $k$ even, we have
\begin{align*}
    F_{k + \ell - 1} F_{k+\ell} - F_{\ell-1} F_\ell &= F_k F_{k + 2\ell -1}.
\end{align*}
\end{lemma}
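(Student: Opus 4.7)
The plan is to prove the identity by induction on $\ell \geq 0$, treating $k$ as a fixed even nonnegative integer. For the base case $\ell = 0$, using the standard extensions $F_{-1} = 1$ and $F_0 = 0$ (already implicit in the integer-indexed identities of Vajda cited earlier), the left side evaluates to $F_{k-1} F_k - F_{-1} F_0 = F_{k-1} F_k$, which matches the right side $F_k F_{k-1}$.

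For the inductive step, assume that $F_{k+\ell-1} F_{k+\ell} - F_{\ell-1} F_\ell = F_k F_{k+2\ell-1}$ holds at some $\ell \geq 0$. I would then consider the ``increment''
$$\bigl(F_{k+\ell} F_{k+\ell+1} - F_\ell F_{\ell+1}\bigr) - \bigl(F_{k+\ell-1} F_{k+\ell} - F_{\ell-1} F_\ell\bigr).$$
Using the elementary telescoping identity $F_n F_{n+1} - F_{n-1} F_n = F_n (F_{n+1} - F_{n-1}) = F_n^2$ applied at $n = k+\ell$ and at $n = \ell$, the increment collapses to $F_{k+\ell}^2 - F_\ell^2$. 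Because $k$ is even, the immediately preceding Lemma~\ref{lem:difference_of_two_Fibonacci_squares} identifies this with $F_k F_{k+2\ell}$. Adding this to the induction hypothesis $F_k F_{k+2\ell-1}$ and invoking the Fibonacci recurrence once gives $F_k(F_{k+2\ell-1} + F_{k+2\ell}) = F_k F_{k+2\ell+1}$, which is the identity at $\ell + 1$.

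No serious obstacle is anticipated; the argument is essentially routine and the parity hypothesis on $k$ enters only through the single invocation of Lemma~\ref{lem:difference_of_two_Fibonacci_squares}. The only genuine content is the recognition of the two ingredients: the telescoping $F_n F_{n+1} - F_{n-1} F_n = F_n^2$, which turns the $\ell \mapsto \ell + 1$ increment in the left side into a difference of Fibonacci squares, and Lemma~\ref{lem:difference_of_two_Fibonacci_squares}, which converts that difference into $F_k$ times a single Fibonacci number so that the Fibonacci recurrence yields the desired right side. An alternative route would apply Vajda's identity $F_{a+b}F_{a+c} - F_a F_{a+b+c} = (-1)^a F_b F_c$ with $a = \ell - 1$, $b = k$, $c = k+1$ and then simplify, but this still requires the evenness of $k$ at a later stage, so the induction above feels cleaner and more in step with the telescoping spirit already used in the proof of Theorem~\ref{thm:Simple_GibSum_formula}.
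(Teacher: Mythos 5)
Your induction is correct, but it takes a genuinely different route from the paper. The paper's proof is a one-line specialization of Vajda's Identity~(20a), $F_{a+b}F_{a+c} - (-1)^a F_b F_c = F_a F_{a+b+c}$, with $a = k$, $b = \ell-1$, $c = \ell$; the evenness of $k$ kills the sign and the lemma drops out immediately. You instead build the identity from scratch by induction on $\ell$: the base case $\ell = 0$ is trivial (with the standard convention $F_{-1}=1$, consistent with the integer-indexed identities already in use), and the inductive step correctly reduces the increment to $F_{k+\ell}^2 - F_\ell^2$ via the telescoping $F_nF_{n+1}-F_{n-1}F_n = F_n^2$, then applies Lemma~\ref{lem:difference_of_two_Fibonacci_squares} and the Fibonacci recurrence. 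What your approach buys is self-containedness --- it derives the lemma from material already proved in the paper rather than importing another external identity from Vajda --- at the cost of a longer argument; the paper's citation-based proof is shorter but leans on an unproved reference. One small quibble: the ``alternative route'' you sketch at the end, taking $a = \ell-1$, $b = k$, $c = k+1$ in Vajda's identity, produces $F_{k+\ell-1}F_{k+\ell} - (-1)^{\ell-1}F_kF_{k+1} = F_{\ell-1}F_{2k+\ell}$, which is not the desired identity; the correct specialization is the one the paper uses, $a=k$, $b=\ell-1$, $c=\ell$. This does not affect your main argument, which stands on its own.
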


\begin{proof}
By Identity~(20a) of Vajda~\cite[p.~28]{Vajda1989}, for all $a,b,c \geq 0$, we have the following identity:
\begin{align*}
    F_{a+b} F_{a+c} - (-1)^a F_b F_c &= F_a F_{a+b+c}
\end{align*}
Setting $a \colonequals k$, $b \colonequals \ell-1$, and $c \colonequals \ell$, and noting that $k$ is even, the result follows.
\end{proof}

\begin{lemma}\label{lem:difference_of_two_Gibonacci_squares}
For $k, \ell \geq 0$  with $k$ even, we have
$$ G_{k+\ell}^2 - G_\ell^2 = F_k \cdot \left(G_0^2 F_{k+2\ell-2} + 2 G_0 G_1 F_{k+2\ell-1} + G_1^2 F_{k+2\ell} \right). $$
\end{lemma}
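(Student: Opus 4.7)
The plan is to expand the squares $G_{k+\ell}^2$ and $G_\ell^2$ using the Vorobiev-type expression of Lemma~\ref{lem:Vorobiev_expression_for_G_i}, then group terms by the monomials $G_0^2$, $2G_0G_1$, and $G_1^2$, and finally invoke the three Fibonacci identities already established (Corollary~\ref{cor:difference_of_two_Fibonacci_squares}, Lemma~\ref{lem:Vajda_Identity_20a}, and Lemma~\ref{lem:difference_of_two_Fibonacci_squares}) to factor out $F_k$ uniformly from each group.

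Concretely, by Lemma~\ref{lem:Vorobiev_expression_for_G_i} I would write
\[
G_{k+\ell} = G_0 F_{k+\ell-1} + G_1 F_{k+\ell}, \qquad G_\ell = G_0 F_{\ell-1} + G_1 F_\ell,
\]
and then expand both squares and subtract. This yields
\[
G_{k+\ell}^2 - G_\ell^2 = G_0^2\bigl(F_{k+\ell-1}^2 - F_{\ell-1}^2\bigr) + 2 G_0 G_1 \bigl(F_{k+\ell-1} F_{k+\ell} - F_{\ell-1} F_\ell\bigr) + G_1^2 \bigl(F_{k+\ell}^2 - F_\ell^2\bigr).
\]

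Next, I would substitute each of the three parenthesized Fibonacci differences using the available identities, each of which requires $k$ even: Corollary~\ref{cor:difference_of_two_Fibonacci_squares} gives $F_{k+\ell-1}^2 - F_{\ell-1}^2 = F_k F_{k+2\ell-2}$, Lemma~\ref{lem:Vajda_Identity_20a} gives $F_{k+\ell-1} F_{k+\ell} - F_{\ell-1} F_\ell = F_k F_{k+2\ell-1}$, and Lemma~\ref{lem:difference_of_two_Fibonacci_squares} gives $F_{k+\ell}^2 - F_\ell^2 = F_k F_{k+2\ell}$. Each of the three terms then carries a common factor $F_k$, which can be pulled out to yield the desired identity.

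Since all three Fibonacci identities needed have already been proven in the paper, and the expansion/subtraction/grouping is purely algebraic, there is essentially no obstacle; the only point of care is to keep the indices aligned (in particular that the three right-hand side indices $k+2\ell-2,\, k+2\ell-1,\, k+2\ell$ that appear in the target formula match exactly those produced by the three invoked identities) and to remember that the hypothesis $k$ even is used three times, once for each Fibonacci identity.
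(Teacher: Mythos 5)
Your proposal is correct and follows essentially the same route as the paper: expand $G_{k+\ell}$ and $G_\ell$ via Lemma~\ref{lem:Vorobiev_expression_for_G_i}, group by $G_0^2$, $2G_0G_1$, $G_1^2$, and apply Lemma~\ref{lem:difference_of_two_Fibonacci_squares} (together with its shifted form, Corollary~\ref{cor:difference_of_two_Fibonacci_squares}) and Lemma~\ref{lem:Vajda_Identity_20a} to pull out the common factor $F_k$. The indices you list match the paper's computation exactly.
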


\begin{proof}
For ease of notation, set $\gamma_{k,\ell} \colonequals G_0^2 F_{k+2\ell-2} + 2 G_0 G_1 F_{k+2\ell-1} + G_1^2 F_{k+2\ell}$. Observe that
\begin{align*}
  G_{k+\ell}^2 &- G_\ell^2\\
  &= \left( G_0 F_{k+\ell-1} + G_1 F_{k+\ell} \right)^2 - \left( G_0 F_{\ell-1} + G_1 F_\ell \right)^2 &\text{by Lemma~\ref{lem:Vorobiev_expression_for_G_i}}\\
  &= \left( G_0^2 F_{k+\ell-1}^2 + 2 G_0 G_1 F_{k+\ell-1} F_{k+\ell} + G_1^2 F_{k+\ell}^2 \right)\\
  & \hspace{2in} - \left( G_0^2 F_{\ell-1}^2 + 2 G_0 G_1 F_{\ell-1} F_{\ell} + G_1^2 F_{\ell}^2 \right)\\
  &= G_0^2 \left( F_{k+\ell-1}^2 - F_{\ell-1}^2\right) + G_1^2 \left( F_{k+\ell}^2 - F_{\ell}^2\right) + 2 G_0 G_1 \left( F_{k+\ell-1} F_{k + \ell} - F_{\ell-1} F_\ell \right)\\
  &= G_0^2 (F_k F_{k+2\ell-2}) + G_1^2 (F_k F_{k+2\ell}) + 2 G_0 G_1 \left( F_{k+\ell-1} F_{k + \ell} - F_{\ell-1} F_\ell \right) &\text{by Lemma~\ref{lem:difference_of_two_Fibonacci_squares}}\\
  &= G_0^2 (F_k F_{k+2\ell-2}) + G_1^2 (F_k F_{k+2\ell}) + 2 G_0 G_1 (F_k F_{k+2\ell-1}) &\text{by Lemma~\ref{lem:Vajda_Identity_20a}}\\
  &= F_k \cdot \left( G_0^2 F_{k+2\ell-2} + 2 G_0 G_1 F_{k+2\ell-1} + G_1^2 F_{k+2\ell} \right)\\
  &= F_k \cdot \gamma_{k,\ell},
\end{align*}
and the result follows.
\end{proof}

We are now ready to state and prove the main theorem of this subsection.

\begin{theorem}\label{thm:closed_form_for_GibSumSquared_when_k_even}
For all $k \geq 1$ with $k$ even, we have
$$ \GibSumSquared = \begin{cases}
  F_k,  & \text{if $5$ does not divide $\mu$}, \\
  5 F_k,  & \text{if $5$ divides $\mu$},
\end{cases}  $$
where $\mu = G_1^2 - G_0 G_1 - G_0^2$.
\end{theorem}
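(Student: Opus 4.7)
The plan is to combine Theorem~\ref{thm:Simpler_GibSum_formula_for_k_even_versus_odd}, which for $k$ even gives $\GibSumSquared = \gcd\bigl(G_{k+1}^2 - G_1^2,\, G_{k+2}^2 - G_2^2\bigr)$, with Lemma~\ref{lem:difference_of_two_Gibonacci_squares}, which factors each argument as $F_k \cdot \gamma_{k,\ell}$ for $\ell = 1, 2$, where $\gamma_{k,\ell} = G_0^2 F_{k+2\ell-2} + 2 G_0 G_1 F_{k+2\ell-1} + G_1^2 F_{k+2\ell}$. Pulling out the common factor $F_k$ yields $\GibSumSquared = F_k \cdot \gcd(\gamma_{k,1}, \gamma_{k,2})$. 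Under the standing assumption $\gcd(G_0, G_1) = 1$ (Convention~\ref{conv:use_only_relatively_prime_initial_values}), it thus suffices to show $\gcd(\gamma_{k,1}, \gamma_{k,2}) = 5$ when $5 \mid \mu$ and $= 1$ otherwise.

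The key simplification is that $\gamma_{k,\ell}$ admits the compact form $G_0 G_{k+2\ell - 1} + G_1 G_{k+2\ell}$, which one checks by substituting $G_i = G_0 F_{i-1} + G_1 F_i$ from Lemma~\ref{lem:Vorobiev_expression_for_G_i} into the right side and collecting terms. This motivates defining $\alpha_j \colonequals G_0 G_{j-1} + G_1 G_j$; using the Gibonacci recurrence for $(G_n)$ gives $\alpha_{j+2} = \alpha_{j+1} + \alpha_j$, so $(\alpha_j)$ is itself a Gibonacci sequence with initial values $\alpha_0 = 2 G_0 G_1 - G_0^2$ and $\alpha_1 = G_0^2 + G_1^2$ (computed via $G_{-1} = G_1 - G_0$). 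Since $\gamma_{k,1} = \alpha_{k+2}$ and $\gamma_{k,2} = \alpha_{k+4} = \alpha_{k+3} + \alpha_{k+2}$, we have $\gcd(\gamma_{k,1}, \gamma_{k,2}) = \gcd(\alpha_{k+2}, \alpha_{k+3})$, which by Lemma~\ref{lem:Consec_Gib_GCD} applied to the Gibonacci sequence $(\alpha_j)$ equals $\gcd(\alpha_0, \alpha_1)$.

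The main obstacle is the remaining number-theoretic computation of $\gcd(\alpha_0, \alpha_1) = \gcd\bigl(G_0(2G_1 - G_0),\, G_0^2 + G_1^2\bigr)$. For any prime $p$ dividing both, the hypothesis $\gcd(G_0, G_1) = 1$ forces $p \nmid G_0$ (else $p \mid G_1^2$, hence $p \mid G_1$); then $p \mid 2G_1 - G_0$ gives $G_0 \equiv 2 G_1 \pmod p$, and substituting into $\alpha_1$ yields $5 G_1^2 \equiv 0 \pmod p$, which with $p \nmid G_1$ forces $p = 5$. A parallel argument modulo $25$ (noting $5 \nmid G_0$ implies $25 \mid 2G_1 - G_0$, which then forces $5 \mid G_1$, a contradiction) rules out $25 \mid \gcd(\alpha_0, \alpha_1)$, so this gcd lies in $\{1, 5\}$. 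Finally, the algebraic identity $4\mu = (2G_1 - G_0)^2 - 5 G_0^2$ gives the equivalence $5 \mid \mu \iff 5 \mid (2G_1 - G_0) \iff G_0 \equiv 2 G_1 \pmod 5$, and this last condition is precisely the criterion for $5$ to divide both $\alpha_0$ and $\alpha_1$. This yields the claimed dichotomy and completes the proof.
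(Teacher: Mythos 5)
Your proposal is correct and follows essentially the same route as the paper: reduce via Theorem~\ref{thm:Simpler_GibSum_formula_for_k_even_versus_odd} and Lemma~\ref{lem:difference_of_two_Gibonacci_squares} to $F_k\cdot\gcd(\gamma_{k,1},\gamma_{k,2})$, observe that the cofactors sit inside a Gibonacci sequence so the gcd collapses to that of two initial terms, and then show that gcd is $1$ or $5$ according to whether $5\mid\mu$. Your cofactor sequence $\alpha_j=G_0G_{j-1}+G_1G_j$ is just a reindexed version of the paper's $\beta_n$, and your identity $4\mu=(2G_1-G_0)^2-5G_0^2$ plays the same role as the paper's congruence $(2G_0+G_1)^2\equiv\mu\pmod{5}$, so the differences are cosmetic.
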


\begin{proof}
Suppose that $k \geq 1$ is even. Recalling that $\gamma_{k,\ell} \colonequals G_0^2 F_{k+2\ell-2} + 2 G_0 G_1 F_{k+2\ell-1} + G_1^2 F_{k+2\ell}$ from Lemma~\ref{lem:difference_of_two_Gibonacci_squares}, we have the sequence of equalities
\begin{align*}
    \GibSumSquared &= \gcd\left(G_{k+1}^2 - G_1^2,\; G_{k+2}^2 - G_2^2\right) &\text{by Theorem~\ref{thm:Simpler_GibSum_formula_for_k_even_versus_odd}}\\
    &= \gcd(F_k \cdot \gamma_{k,1},\; F_k \cdot \gamma_{k,2}) & \text{by Lemma~\ref{lem:difference_of_two_Gibonacci_squares}}\\
    &= F_k \cdot \gcd(\gamma_{k,1},\; \gamma_{k,2}).
\end{align*}
It suffices to show that $\gcd(\gamma_{k,1},\; \gamma_{k,2}) = \gcd(\mu,5)$. Setting $\beta_n \colonequals G_0^2 F_n + 2 G_0 G_1 F_{n+1} + G_1^2 F_{n+2}$, we see that $\gamma_{k,1} = \beta_k$ and $\gamma_{k,2} = \beta_{k+2}$. Moreover, the sequence $\left( \beta_n \right)_{n \geq 0}$ is itself a generalized Fibonacci sequence since the recursion $\beta_{n+2} = \beta_n + \beta_{n+1}$ holds for all $n \geq 0$ as follows:
\begin{align*}
    \beta_{n+2} &= G_0^2 F_{n+2} + 2 G_0 G_1 F_{n+3} + G_1^2 F_{n+4}\\
    &=G_0^2 (F_{n} + F_{n+1}) + 2 G_0 G_1 (F_{n+1} + F_{n+2}) + G_1^2 (F_{n+2} + F_{n+3})\\
    &= \left( G_0^2 F_n + 2 G_0 G_1 F_{n+1} + G_1^2 F_{n+2} \right) + \left( G_0^2 F_{n+1} + 2 G_0 G_1 F_{n+2} + G_1^2 F_{n+3} \right)\\
    &= \beta_n + \beta_{n+1}.
\end{align*}
Hence $\gcd(\beta_n, \beta_{n+1}) = \gcd(\beta_0, \beta_1)$ for all $n\geq 0$ by Lemma~\ref{lem:Consec_Gib_GCD}. Thus we have
$$ \gcd(\gamma_{k,1},\; \gamma_{k,2}) = \gcd(\beta_k, \beta_{k+2}) = \gcd(\beta_k, \beta_k + \beta_{k+1}) = \gcd(\beta_k, \beta_{k+1}) = \gcd(\beta_0, \beta_1).$$
Since $\beta_0 = 2 G_0 G_1 + G_1^2$ and $\beta_1 = G_0^2 + 2 G_0 G_1 + 2 G_1^2$, it follows that
\begin{align*}
    \gcd(\gamma_{k,1},\; \gamma_{k,2}) = \gcd(\beta_0, \beta_1) &= \gcd\left( 2 G_0 G_1 + G_1^2, \; G_0^2 + 2 G_0 G_1 + 2 G_1^2 \right)\\
    &= \gcd\left( 2 G_0 G_1 + G_1^2, \; G_0^2 + G_1^2 + (2 G_0 G_1 + G_1^2) \right)\\
    &= \gcd\left( 2 G_0 G_1 + G_1^2, \; G_0^2 + G_1^2 \right).
\end{align*}
We now show that $\gcd\left( 2 G_0 G_1 + G_1^2, \; G_0^2 + G_1^2 \right) = \gcd(\mu, 5)$ by proving the following two claims:
\begin{align}
    \gcd\left( 2 G_0 G_1 + G_1^2, \; G_0^2 + G_1^2 \right) &\geq \gcd(\mu, 5), \text{ and }\label{eq:inequality_1}\\
    \gcd\left( 2 G_0 G_1 + G_1^2, \; G_0^2 + G_1^2 \right) &\leq \gcd(\mu, 5).\label{eq:inequality_2}
\end{align}

To that end, suppose that for some prime $p$ and $j \geq 1$, we have $p^j$ divides $\gcd(\mu,5)$. Then $p^j$ divides 5, in particular, and hence $p=5$ and $j=1$, and so 5 divides $\mu$. Observe that
\begin{align}
    (2G_0 + G_1)^2 = 4 G_0^2 + 4 G_0 G_1 + G_1^2 &\equiv -G_0^2 - G_0 G_1 + G_1^2 \equiv \mu \pmodd{5}.\label{eq:mu_congruence}
\end{align}
But since 5 divides $\mu$, we deduce that $(2G_0 + G_1)^2 \equiv 0 \pmod{5}$, and hence 5 divides $2G_0 + G_1$. This yields the congruence $2 G_0 G_1 + G_1^2 = (2 G_0 + G_1) G_1 \equiv 0 \pmod{5}$, and so 5 divides $2 G_0 G_1 + G_1^2$. Moreover, observe that
\begin{align*}
    G_0^2 + G_1^2 = 5 G_0^2 - 4 G_0^2 + G_1^2 & \equiv - 4 G_0^2 + G_1^2 \pmodd{5}\\
    &\equiv - (4G_0^2 - G_1^2) \pmodd{5}\\
    &\equiv -(2 G_0 + G_1)(2 G_0 - G_1) \pmodd{5}\\
    &\equiv 0 \pmodd{5},
\end{align*}
since 5 divides $2G_0 + G_1$. Hence 5 divides $G_0^2 + G_1^2$. Thus $p^j$ divides both $2 G_0 G_1 + G_1^2$ and $G_0^2 + G_1^2$, and we conclude $p^j$ divides $\gcd\left( 2 G_0 G_1 + G_1^2, \; G_0^2 + G_1^2 \right)$, which proves Inequality~\eqref{eq:inequality_1} holds.

To prove Inequality~\eqref{eq:inequality_2} holds, suppose that  for some prime $p$ and $j \geq 1$, we have $p^j$ divides $\gcd\left( 2 G_0 G_1 + G_1^2, \; G_0^2 + G_1^2 \right)$. Then $p^j$ divides both $2 G_0 G_1 + G_1^2$ and $G_0^2 + G_1^2$. The prime $p$ cannot divide $G_1$ for otherwise it would also divide $G_0$, contradicting $\gcd(G_0,G_1)=1$ (recall Convention~\ref{conv:relatively_prime_initial_values}). Since $p^j$ divides $2 G_0 G_1 + G_1^2$ and $2 G_0 G_1 + G_1^2 = (2G_0+G_1)G_1$, then $p^j$ must divide $2 G_0 + G_1$ since $p$ does not divide $G_1$. Thus we have
\begin{align*}
    5G_0^2  &= \left( 4G_0^2 - G_1^2 \right) + \left(G_0^2 + G_1^2\right)\\
    &= (2G_0 + G_1) (2G_0 - G_1) + \left(G_0^2 + G_1^2\right)\\
    &\equiv 0 \pmodd{p^j},
\end{align*}
and hence $p^j$ divides $5G_0^2$. The prime $p$ cannot divide $G_0$ for otherwise it would also divide $G_1$, contradicting $\gcd(G_0,G_1)=1$. Thus, $p^j$ dividing $5G_0^2$ implies that $p^j$ divides 5, and hence $p=5$ and $j=1$. Since we proved earlier that $p^j$ divides $2G_0+G_1$, it follows that 5 divides $2G_0+G_1$. In particular, due to Congruence~\eqref{eq:mu_congruence}, we have 5 divides $\mu$. We conclude $p^j$ divides $\gcd(\mu,5)$, which proves Inequality~\eqref{eq:inequality_2} holds. Therefore  $\GibSumSquared = F_k \cdot \gcd(\mu,5)$, as desired.
\end{proof}


\subsection{Remarks on the closed form for \texorpdfstring{$\GibSumSquared$}{curly G squared} when \texorpdfstring{$k$}{k} is odd}

When $k$ is odd, Theorem~\ref{thm:Simpler_GibSum_formula_for_k_even_versus_odd} yields the following closed form for the value $\GibSumSquared$:
$$\GibSumSquared = \gcd(2\mu, g_k),$$
where $g_k \colonequals \gcd\left( G_{k+1}^2 - G_1^2, \; G_{k+2}^2 - G_2^2 \right)$. We immediately arrive at the following theorem as a consequence.

\begin{theorem}\label{thm:GibSumSquared_divides_mu_when_k_odd}
For all $k \geq 1$ with $k$ odd, we have $\GibSumSquared$ divides the value $|2\mu|$ where $\mu = G_1^2 - G_0 G_1 - G_0^2$.
\end{theorem}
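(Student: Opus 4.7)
The plan is to invoke Theorem~\ref{thm:Simpler_GibSum_formula_for_k_even_versus_odd} directly, since the statement is essentially a corollary of the odd-$k$ case of that theorem. Specifically, for odd $k$ we have the identity
\[
\GibSumSquared = \gcd\bigl(2\mu,\; G_{k+1}^2 - G_1^2,\; G_{k+2}^2 - G_2^2\bigr),
\]
and by the elementary property that $\gcd(a, b_1, b_2, \ldots)$ divides each of its arguments, the left-hand side must divide $2\mu$.

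The only small subtlety is the passage from dividing $2\mu$ (a possibly signed or zero integer) to dividing $|2\mu|$. I would simply note that the $\gcd$ by convention is taken to be nonnegative, and divisibility in $\mathbb{Z}$ is unaffected by sign, so $\GibSumSquared \mid 2\mu$ is equivalent to $\GibSumSquared \mid |2\mu|$. If $\mu = 0$, then every positive integer divides $|2\mu| = 0$ vacuously, so the statement still holds trivially (and one should note that this degenerate case does not cause issues because the $\gcd$ in Theorem~\ref{thm:Simpler_GibSum_formula_for_k_even_versus_odd} then reduces to $\gcd(G_{k+1}^2 - G_1^2, G_{k+2}^2 - G_2^2)$, which is a well-defined nonnegative integer).

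There is no substantive obstacle here; the entire proof fits in one or two sentences, which is consistent with the author's flag immediately before the statement (\emph{``We immediately arrive at the following theorem as a consequence.''}). The real content has already been packaged inside Theorem~\ref{thm:Simpler_GibSum_formula_for_k_even_versus_odd}, and the present theorem is simply the corollary that extracts one divisibility consequence from the gcd formula.
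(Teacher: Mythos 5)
Your proposal is correct and matches the paper's own proof essentially verbatim: both simply cite Theorem~\ref{thm:Simpler_GibSum_formula_for_k_even_versus_odd} to write $\GibSumSquared = \gcd(2\mu, g_k)$ for odd $k$ and conclude that a gcd divides each of its arguments, hence divides $|2\mu|$. Your extra remark about the sign and the degenerate case $\mu = 0$ is harmless (and in fact $\mu \neq 0$ for integer initial conditions not both zero, since $G_1^2 - G_0 G_1 - G_0^2 = 0$ would force an irrational ratio $G_1/G_0$).
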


\begin{proof}
Given any odd integer $k \geq 1$, Theorem~\ref{thm:Simpler_GibSum_formula_for_k_even_versus_odd} implies that $\GibSumSquared = \gcd(2\mu, g_k)$, where $g_k \colonequals \gcd\left( G_{k+1}^2 - G_1^2, \; G_{k+2}^2 - G_2^2 \right)$. Hence $\GibSumSquared$ divides $|2\mu|$, and the result follows.
\end{proof}

\begin{remark}
It is worthy to note that Theorem~\ref{thm:GibSumSquared_divides_mu_when_k_odd} does not necessarily hold for even $k$ values. For example, consider the Fibonacci setting with $k = 4$. Then $\FibSumSquared = F_4 = 3$ by Theorem~\ref{thm:FibSumSquared_closed_forms}. In the Fibonacci setting, $\mu = 1$, however 3 does not divide $|2 \mu|$, which equals 2.
\end{remark}

\begin{remark}\label{rem:all_divisors_of_2mu_attained}
It is also worthy to note that the sequence $(\GibSumSquared)_{k \geq 1}$ can attain all positive divisors of $|2 \mu|$ as values. For example, consider the Gibonacci sequence with initial values $G_0 = 3$ and $G_1 = 1$. Then $\mu = 1^2 - 1 \cdot 3 - 3^2 = -11$ and hence $|2\mu| = 22$. Using the closed form $\GibSumSquared = \gcd(2\mu, g_k)$ implied by Theorem~\ref{thm:Simpler_GibSum_formula_for_k_even_versus_odd}, we leave it to the reader to verify the following:
$$ \GibSumSquared = \begin{cases}
  1,  & \text{if $k=7$}, \\
  2,  & \text{if $k=3$}, \\
  11,  & \text{if $k=5$}, \\
  22,  & \text{if $k=15$}.
\end{cases} $$
\end{remark}

The following theorem provides a sufficiency condition for when the value $\GibSumSquared$ attains its maximal value $|2 \mu|$ in the case that $k$ is odd.

\begin{theorem}\label{thm:GibSumSquared_attains_max_2mu_value_when_k_odd}
Let $k \geq 1$ be odd, and set $\ell_k \colonequals \gcd(G_{k+1} - G_1, G_{k+2} - G_2)$. If $2\mu$ divides $\ell_k$, then $\mathcal{G}_{G_0, G_1}^2\!(k \ell) = |2\mu|$ for all odd integers $\ell \geq 1$.
\end{theorem}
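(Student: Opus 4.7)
The plan is to reduce to the simplified formula from Theorem~\ref{thm:Simpler_GibSum_formula_for_k_even_versus_odd}. Since $k$ and $\ell$ are both odd, the product $k\ell$ is odd, and that theorem gives
$$ \mathcal{G}_{G_0,G_1}^2(k\ell) \;=\; \gcd\bigl(2\mu,\; G_{k\ell+1}^2 - G_1^2,\; G_{k\ell+2}^2 - G_2^2 \bigr). $$
To conclude that this GCD equals $|2\mu|$, it suffices to show that $|2\mu|$ divides both $G_{k\ell+1}^2 - G_1^2$ and $G_{k\ell+2}^2 - G_2^2$; the natural route is to lift divisibility from the linear setting via the hypothesis $2\mu \mid \ell_k$.

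Next I would invoke the Guyer--Mbirika formula $\mathcal{G}_{G_0,G_1}(k) = \gcd(G_{k+1}-G_1,\, G_{k+2}-G_2) = \ell_k$~\cite[Theorem~15]{Guyer_Mbirika2021}. A short block-decomposition observation yields the key multiplicative divisibility $\ell_k \mid \mathcal{G}_{G_0,G_1}(k\ell)$: any sum of $k\ell$ consecutive Gibonacci terms splits as
$$ \sum_{i=1}^{k\ell} G_{n+i} \;=\; \sum_{j=0}^{\ell-1} \sum_{i=1}^{k} G_{n+jk+i}, $$
and each inner block is a sum of $k$ consecutive Gibonacci numbers, hence divisible by $\mathcal{G}_{G_0,G_1}(k) = \ell_k$. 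Taking the GCD over $n \geq 0$ gives $\ell_k \mid \mathcal{G}_{G_0,G_1}(k\ell)$, and then applying Guyer--Mbirika once more to $k\ell$ yields $\ell_k \mid G_{k\ell+1}-G_1$ and $\ell_k \mid G_{k\ell+2}-G_2$.

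Combined with the hypothesis $2\mu \mid \ell_k$, this transitively yields $2\mu \mid G_{k\ell+1}-G_1$ and $2\mu \mid G_{k\ell+2}-G_2$. The difference-of-squares factorization $G_{k\ell+j}^2 - G_j^2 = (G_{k\ell+j}-G_j)(G_{k\ell+j}+G_j)$ for $j \in \{1,2\}$ then shows that $|2\mu|$ divides both squared differences in the triple GCD above, forcing $\mathcal{G}_{G_0,G_1}^2(k\ell) = |2\mu|$, as desired.

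The only step requiring genuine thought is the block-decomposition argument giving $\mathcal{G}_{G_0,G_1}(k) \mid \mathcal{G}_{G_0,G_1}(k\ell)$; everything else is a short chain of divisibilities. This multiplicative property of the linear GCD is not used elsewhere in the paper, so I would expect to state and justify it explicitly (perhaps inline, or as a brief standalone lemma) before chaining it with the hypothesis and the difference-of-squares factorization.
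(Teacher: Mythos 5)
Your proof is correct, and it reaches the same two pivotal facts as the paper's proof --- that $2\mu$ divides $G_{k\ell+1}-G_1$ and $G_{k\ell+2}-G_2$, after which the difference-of-squares factorization and the odd-case formula $\mathcal{G}_{G_0,G_1}^2(k\ell)=\gcd(2\mu,g_{k\ell})$ finish things off identically --- but you get there by a genuinely different mechanism. The paper stays inside the linear recurrence: from $2\mu\mid G_{k+1}-G_1$ and $2\mu\mid G_{k+2}-G_2$ it deduces inductively that $2\mu\mid G_{k+n}-G_n$ for all $n\geq 1$, and then telescopes $G_{k\ell+1}-G_1=\sum_{i=1}^{\ell}\bigl(G_{ki+1}-G_{k(i-1)+1}\bigr)$ (and likewise with index $2$). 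You instead route through the \emph{linear} GCD $\mathcal{G}_{G_0,G_1}(k)=\gcd(G_{k+1}-G_1,G_{k+2}-G_2)=\ell_k$ of Guyer--Mbirika, prove the divisibility $\ell_k\mid\mathcal{G}_{G_0,G_1}(k\ell)$ by splitting a sum of $k\ell$ consecutive terms into $\ell$ blocks of $k$, and then apply the Guyer--Mbirika formula a second time at $k\ell$. Both arguments are short and sound; yours buys a statement of independent interest (the divisibility $\mathcal{G}_{G_0,G_1}(k)\mid\mathcal{G}_{G_0,G_1}(k\ell)$, which, as you note, would deserve its own lemma since it is not used elsewhere), at the cost of importing an external theorem twice, whereas the paper's telescoping argument is self-contained and needs only the two hypothesized divisibilities plus the recurrence. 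One small remark: your block decomposition does not use that $\ell$ is odd --- the parity of $\ell$ enters only when you invoke the odd-case formula of Theorem~\ref{thm:Simpler_GibSum_formula_for_k_even_versus_odd} at $k\ell$, exactly as in the paper.
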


\begin{proof}
Suppose that $k \geq 1$ is odd, and set $\ell_k \colonequals \gcd(G_{k+1} - G_1, G_{k+2} - G_2)$. Assume that $2\mu$ divides $\ell_k$. Then $2\mu$ divides both $G_{k+1} - G_1$ and $G_{k+2} - G_2$. By a simple inductive argument, the latter implies that $2\mu$ divides $G_{k+n} - G_n$ for all $n \geq 1$. Hence, it follows that $2\mu$ divides $G_{ki+1} - G_{k(i-1)+1}$ and also $G_{ki+2} - G_{k(i-1)+2}$ for all $1 \leq i \leq \ell$. Moreover, since $\sum_{i=1}^\ell \left(G_{ki+1} - G_{k(i-1)+1}\right) = G_{k\ell+1}-G_1$ and $\sum_{i=1}^\ell \left(G_{ki+2} - G_{k(i-1)+2}\right) = G_{k\ell+2}-G_2$, then we conclude that $2\mu$ divides both $G_{k\ell+1}-G_1$ and $G_{k\ell+2}-G_2$. Recalling that Theorem~\ref{thm:Simpler_GibSum_formula_for_k_even_versus_odd} implies the formula $\GibSumSquared = \gcd(2\mu, g_k)$, where $g_k \colonequals \gcd\left( G_{k+1}^2 - G_1^2, \; G_{k+2}^2 - G_2^2 \right)$, we have
\begin{align*}
    g_{k\ell} &= \gcd\left( G_{k\ell+1}^2 - G_1^2, \; G_{k\ell+2}^2 - G_2^2 \right)\\
    &= \gcd\bigl( (G_{k\ell+1} - G_1)(G_{k\ell+1} + G_1), \; (G_{k\ell+2} - G_2)(G_{k\ell+2} + G_2) \bigr),
\end{align*}
and hence $2\mu$ divides $g_{k\ell}$. Thus $\mathcal{G}_{G_0, G_1}^2\!(k \ell) = \gcd(2\mu, g_{k\ell}) = |2\mu|$ for all odd integers $\ell \geq 1$.
\end{proof}

\begin{example}
Consider the Gibonacci sequence $(G_n)_{n\geq0}$ with initial values $G_0 = 2$ and $G_1 = 7$. Then $\mu = 7^2 - 2 \cdot 7 - 2^2 = 31$. Also observe that for $k = 15$, we have
$$\ell_{15} = \gcd(G_{16} - G_1, G_{17} - G_2) = \gcd(8122, 13144) = 62,$$
and indeed $2\mu$ clearly divides $\ell_k$ since $2\mu = \ell_k$, in particular, in this case. Hence by Theorem~\ref{thm:GibSumSquared_attains_max_2mu_value_when_k_odd}, we know $\mathcal{G}_{G_0, G_1}^2\!(15 \ell) = 62$ for all odd integers $\ell \geq 1$. We leave it to the reader to verify this.
\end{example}


\section{Closed forms in the Fibonacci and Lucas settings}\label{sec:closed_form_in_Fibonacci_and_Lucas_settings}

In this section, we prove the following closed forms on the GCD of the sum of $k$ consecutive squares of Fibonacci and Lucas numbers:
\begin{center}
    \begin{tabular}{|c||c|c|c|}
    \hline
         $k$ & $\FibSumSquared$ & $\LucSumSquared$ & Proof in this paper \\ \hline\hline
         $k$ even & $F_k$ & $5F_k$ & Theorems~\ref{thm:FibSumSquared_closed_forms} and \ref{thm:LucSumSquared_closed_forms}, respectively \\ \hline
         $k \equiv 3 \pmod{6}$ & 2 & 2 & Theorems~\ref{thm:FibSumSquared_closed_forms} and \ref{thm:LucSumSquared_closed_forms}, respectively \\ \hline
         $k \equiv 1,5 \pmod{6}$ & 1 & 1 & Theorems~\ref{thm:FibSumSquared_closed_forms} and \ref{thm:LucSumSquared_closed_forms}, respectively \\ \hline
    \end{tabular}
\end{center}

\begin{lemma}\label{lem:even_F_k_values}
For all $n \in \mathbb{Z}$, the value $F_n$ is even if and only if $3$ divides $n$.
\end{lemma}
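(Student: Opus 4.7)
The plan is to reduce the Fibonacci sequence modulo $2$ and identify its period. First I would compute the initial residues $F_0 \equiv 0$, $F_1 \equiv 1$, $F_2 \equiv 1 \pmod{2}$, and observe that $F_3 = F_2 + F_1 \equiv 0 \pmod{2}$, while $F_4 \equiv 1$ and $F_5 \equiv 1 \pmod{2}$. This matches the initial triple $(F_0, F_1, F_2)$ modulo $2$, showing that the Pisano period $\pi_F(2)$ equals $3$.

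Next, a routine induction on $k \geq 0$ confirms that $(F_{3k}, F_{3k+1}, F_{3k+2}) \equiv (0, 1, 1) \pmod{2}$ for every $k \geq 0$: the inductive step uses the recurrence $F_n = F_{n-1} + F_{n-2}$ to compute $F_{3k+3} \equiv 1 + 1 \equiv 0$, $F_{3k+4} \equiv 1 + 0 \equiv 1$, and $F_{3k+5} \equiv 0 + 1 \equiv 1$ modulo $2$. This immediately yields the biconditional for all $n \geq 0$: namely, $2 \mid F_n$ if and only if $3 \mid n$.

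To extend the result to negative integers $n$, I would invoke the standard identity $F_{-n} = (-1)^{n+1} F_n$ for the two-sided extension of the Fibonacci sequence, which preserves parity. Since $3 \mid n$ if and only if $3 \mid (-n)$, the biconditional transfers to all $n \in \mathbb{Z}$. No step here presents a real obstacle; the argument is essentially the folklore fact that $\pi_F(2) = 3$, together with a one-line observation about negative indices.
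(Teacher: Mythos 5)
Your proof is correct, but it takes a different route from the paper. The paper's proof is a one-liner: it observes that $F_3 = 2$ and invokes the divisibility law that $F_m$ divides $F_n$ if and only if $m$ divides $n$ (citing Koshy), so that $2 = F_3 \mid F_n$ exactly when $3 \mid n$. You instead work directly modulo $2$: you verify that the residues repeat with period $3$ in the pattern $(0,1,1)$, run the induction, and then handle negative indices via $F_{-n} = (-1)^{n+1}F_n$. Your argument is more elementary and self-contained --- it needs no external divisibility theorem --- and it is actually more careful on one point the paper glosses over: the lemma is stated for all $n \in \mathbb{Z}$, and the standard divisibility law is usually stated for nonnegative indices, whereas you explicitly transfer the result to negative $n$. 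What the paper's approach buys is brevity and a conceptual explanation (the factor $F_3$ sits inside $F_{3k}$ for structural reasons), at the cost of quoting a stronger result than is needed. Both proofs are complete and valid.
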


\begin{proof}
This follows from the fact that $F_3=2$ and the well-known identity, $F_m$ divides $F_n$ if and only if $m$ divides $n$ (see Corollary~10.2 of Koshy~\cite[p.~173]{Koshy2018}).
\end{proof}

\begin{theorem}\label{thm:FibSumSquared_closed_forms}
For all $k \geq 1$, we have
$$ \FibSumSquared = \begin{cases}
  F_k,  & \text{if $k$ is even}, \\
  2, & \text{if $k \equiv 3 \pmodd{6}$},\\
  1, & \text{if $k \equiv 1,5 \pmodd{6}$}.
\end{cases} $$
\end{theorem}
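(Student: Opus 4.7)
The plan is to reduce this to the two main structural theorems already established: Theorem~\ref{thm:closed_form_for_GibSumSquared_when_k_even} for the even case, and Theorem~\ref{thm:Simpler_GibSum_formula_for_k_even_versus_odd} for the odd case, specialized to the Fibonacci initial conditions $G_0 = 0$ and $G_1 = 1$. In this setting, the characteristic parameter $\mu = 1^2 - 0 \cdot 1 - 0^2 = 1$, so in particular $5 \nmid \mu$.

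First, for $k$ even, Theorem~\ref{thm:closed_form_for_GibSumSquared_when_k_even} immediately yields $\FibSumSquared = F_k$, since we are in the case $5 \nmid \mu$. This case requires no further work.

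For $k$ odd, Theorem~\ref{thm:Simpler_GibSum_formula_for_k_even_versus_odd} specializes to
$$\FibSumSquared = \gcd(2\mu,\, g_k) = \gcd(2,\, g_k),$$
where $g_k = \gcd(F_{k+1}^2 - 1,\, F_{k+2}^2 - 1)$. Hence $\FibSumSquared$ is either $1$ or $2$, and equals $2$ precisely when $g_k$ is even, which happens precisely when both $F_{k+1}^2 - 1$ and $F_{k+2}^2 - 1$ are even, i.e., when both $F_{k+1}$ and $F_{k+2}$ are odd. By Lemma~\ref{lem:even_F_k_values}, this is equivalent to $3 \nmid k+1$ and $3 \nmid k+2$.

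The final step is a straightforward residue analysis modulo $6$. For $k \equiv 3 \pmod 6$, we have $k+1 \equiv 4$ and $k+2 \equiv 5 \pmod 6$, so $3$ divides neither; hence $F_{k+1}$ and $F_{k+2}$ are both odd and $\FibSumSquared = 2$. For $k \equiv 1 \pmod 6$, we have $k+2 \equiv 3 \pmod 6$, so $F_{k+2}$ is even, making $F_{k+2}^2 - 1$ odd and forcing $g_k$ to be odd, yielding $\FibSumSquared = 1$. Symmetrically, for $k \equiv 5 \pmod 6$, we have $k+1 \equiv 0 \pmod 6$, so $F_{k+1}$ is even and the same argument gives $\FibSumSquared = 1$. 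Since no step involves a genuine obstacle once the heavy machinery of Theorems~\ref{thm:closed_form_for_GibSumSquared_when_k_even} and \ref{thm:Simpler_GibSum_formula_for_k_even_versus_odd} is available, the entire proof is essentially a short assembly, with the residue analysis being the only place where care is needed to enumerate the odd residues modulo $6$ correctly.
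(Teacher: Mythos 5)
Your proof is correct and follows essentially the same route as the paper: the even case is immediate from Theorem~\ref{thm:closed_form_for_GibSumSquared_when_k_even} with $\mu=1$, and the odd case reduces to a parity analysis of Fibonacci numbers via Lemma~\ref{lem:even_F_k_values}. The only (harmless) difference is that for odd $k$ you work directly from $\gcd(2\mu, g_k)$ in Theorem~\ref{thm:Simpler_GibSum_formula_for_k_even_versus_odd} and test when $F_{k+1}$ and $F_{k+2}$ are both odd, whereas the paper starts from the three-term formula of Theorem~\ref{thm:Simple_GibSum_formula} and argues through the factor $F_k F_{k+1}$; both yield the same mod~$6$ classification.
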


\begin{proof}
Since $\mu = F_1^2 - F_0 F_1 - F_0^2 = 1$, Theorem~\ref{thm:closed_form_for_GibSumSquared_when_k_even} in the Fibonacci setting yields $\FibSumSquared = F_k$ when $k$ is even since 5 does not divide $\mu$. On the other hand, when $k$ is odd, Theorem~\ref{thm:GibSumSquared_divides_mu_when_k_odd} in the Fibonacci setting yields $\FibSumSquared$ divides $|2\mu|$, and hence $\FibSumSquared = 1 \text{ or } 2$. Recall that by Theorem~\ref{thm:Simple_GibSum_formula} in the Fibonacci setting, we have
$$\FibSumSquared = \gcd\left(F_k F_{k+1},\; F_{k+1}^2 - 1,\; F_{k+2}^2 - 1\right),$$
and hence if $\FibSumSquared = 2$, then 2 divides $F_k F_{k+1}$ so either $F_k$ or $F_{k+1}$ is even. However, 2 dividing $F_{k+1}^2 - 1$  implies $F_{k+1}$ is odd, and thus $F_k$ must be even. By Lemma~\ref{lem:even_F_k_values}, it follows that 3 divides $k$, and in particular $k \equiv 3 \pmod{6}$ since $k$ is odd. On the other hand, if $k \equiv 3 \pmod{6}$, then 3 divides $k$ and hence $F_k$ is even by Lemma~\ref{lem:even_F_k_values}, and so both $F_{k+1}$ and $F_{k+2}$ are both odd implying $F_{k+1}^2 - 1$ and $F_{k+2}^2 - 1$ are both even. Thus $\FibSumSquared$ is even; that is, $\FibSumSquared = 2$ is forced. Thus if $k$ is odd, then $\FibSumSquared = 2$ if and only if $k \equiv 3 \pmod{6}$. And consequently if $k$ is odd, then $\FibSumSquared = 1$ if and only if $k \equiv 1,5 \pmod{6}$.
\end{proof}

To prove the Lucas version of Theorem~\ref{thm:FibSumSquared_closed_forms}, we first give the Lucas version of the necessary and sufficient condition for $L_n$ to be even (compare this with the Fibonacci version given in Lemma~\ref{lem:even_F_k_values}).

\begin{lemma}\label{lem:even_L_k_values}
For all $n \in \mathbb{Z}$, the value $L_n$ is even if and only if $3$ divides $n$.
\end{lemma}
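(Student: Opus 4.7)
The plan is to prove the lemma by computing the Lucas sequence modulo $2$ and observing that it is purely periodic with period $3$. Directly from the definition of the Lucas sequence, $L_0 = 2 \equiv 0$, $L_1 = 1 \equiv 1$, and $L_2 = 3 \equiv 1 \pmod{2}$. The recurrence $L_n \equiv L_{n-1} + L_{n-2} \pmod{2}$ then forces $L_3 \equiv 0$, $L_4 \equiv 1$, and $L_5 \equiv 1 \pmod{2}$, so that $(L_3, L_4) \equiv (L_0, L_1) \pmod{2}$. Hence the sequence $(L_n \bmod 2)_{n \geq 0}$ is purely periodic with period $3$, following the repeating block $(0,1,1)$, and within one period the value $0$ appears exactly at the index congruent to $0 \pmod{3}$.

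For non-negative $n$, this immediately yields the biconditional: $L_n$ is even if and only if $3 \mid n$. To extend the result to all $n \in \mathbb{Z}$, I would use the backwards recurrence $L_{n-2} = L_n - L_{n-1}$, which also reduces modulo $2$ in the same way and extends the period-$3$ pattern bi-infinitely. The argument is elementary and I do not anticipate any substantive obstacle; the only minor point requiring care is verifying that the backwards extension preserves the parity cycle, which is immediate. (As an aside, one could alternatively use the identity $L_n = F_{n-1} + F_{n+1}$ together with Lemma~\ref{lem:even_F_k_values} to deduce the same conclusion, but the direct period-$3$ computation is shorter.)
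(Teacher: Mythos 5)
Your proof is correct and complete. Note that the paper does not actually prove this lemma itself; it simply cites Identity~(23.2) in Theorem~23.1 of Koshy, so your argument is a genuine self-contained alternative rather than a reproduction of the paper's reasoning. Your route --- reducing the Lucas recurrence modulo $2$, observing the purely periodic block $(0,1,1)$ starting from $(L_0,L_1)\equiv(0,1)$, and extending bi-infinitely via the backwards recurrence $L_{n-2}=L_n-L_{n-1}$ --- is the standard elementary proof of that identity, and the extension to negative indices is handled correctly (one can spot-check $L_{-1}=-1$, $L_{-2}=3$, $L_{-3}=-4$, which continue the pattern). What your version buys is self-containedness at essentially no cost in length; what the paper's citation buys is brevity and consistency with how it treats the parallel Fibonacci statement (Lemma~\ref{lem:even_F_k_values}), which is likewise deduced from a known divisibility fact rather than proved from scratch. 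Your parenthetical alternative via $L_n=F_{n-1}+F_{n+1}$ would also work, but the direct parity computation is the cleaner choice here.
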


\begin{proof}
See Identity~(23.2) in Theorem~23.1 of Koshy~\cite[p.~462]{Koshy2018}.
\end{proof}

\begin{theorem}\label{thm:LucSumSquared_closed_forms}
For all $k \geq 1$, we have
$$ \LucSumSquared = \begin{cases}
  5F_k,  & \text{if $k$ is even}, \\
  2, & \text{if $k \equiv 3 \pmodd{6}$},\\
  1, & \text{if $k \equiv 1,5 \pmodd{6}$}.
\end{cases} $$
\end{theorem}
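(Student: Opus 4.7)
The plan is to mirror the structure of the proof of Theorem~\ref{thm:FibSumSquared_closed_forms} but with the crucial new feature that for the Lucas sequence $\mu = L_1^2 - L_0 L_1 - L_0^2 = 1 - 2 - 4 = -5$, so this time $5 \mid \mu$. I will split into the two cases $k$ even and $k$ odd and handle each by invoking an already-proved structural result from Section~\ref{sec:closed_form_in_Gibonacci_setting} and then doing a short mod-$2$/mod-$5$ analysis.

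For $k$ even, the result is immediate: since $5 \mid \mu$ in the Lucas setting, Theorem~\ref{thm:closed_form_for_GibSumSquared_when_k_even} gives $\LucSumSquared = 5 F_k$ directly. No further work is required.

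For $k$ odd, Theorem~\ref{thm:GibSumSquared_divides_mu_when_k_odd} tells us $\LucSumSquared$ divides $|2\mu| = 10$, so the value lies in $\{1,2,5,10\}$. The first subclaim is that $5 \nmid \LucSumSquared$. For this I will use the formula from Theorem~\ref{thm:Simple_GibSum_formula} in the Lucas setting,
\[
\LucSumSquared = \gcd\!\left(L_k L_{k+1} - 2,\; L_{k+1}^2 - 1,\; L_{k+2}^2 - 9\right),
\]
and note that the Lucas sequence modulo $5$ is purely periodic with pattern $2,1,3,4,2,1,3,4,\ldots$ (period $4$). Since $k$ is odd, $k+1$ is even, so $L_{k+1} \equiv 2$ or $3 \pmod{5}$, whence $L_{k+1}^2 \equiv 4 \pmod{5}$ and therefore $L_{k+1}^2 - 1 \equiv 3 \pmod{5}$. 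This rules out the factor of $5$, forcing $\LucSumSquared \in \{1,2\}$.

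The second subclaim is that $\LucSumSquared = 2$ precisely when $k \equiv 3 \pmod{6}$. This is the part that parallels the Fibonacci argument and uses Lemma~\ref{lem:even_L_k_values} (that $L_n$ is even iff $3 \mid n$). If $\LucSumSquared = 2$, then $2$ divides $L_{k+1}^2 - 1$, so $L_{k+1}$ is odd; and $2$ divides $L_k L_{k+1} - 2$, so $L_k$ is even, forcing $3 \mid k$ and hence $k \equiv 3 \pmod{6}$. Conversely, if $k \equiv 3 \pmod{6}$, then $3 \mid k$ makes $L_k$ even, while $3 \nmid k+1$ and $3 \nmid k+2$ make $L_{k+1}$ and $L_{k+2}$ odd; then each of the three entries $L_k L_{k+1} - 2$, $L_{k+1}^2 - 1$, $L_{k+2}^2 - 9$ is even, so $2 \mid \LucSumSquared$, yielding $\LucSumSquared = 2$. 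The remaining odd residues $k \equiv 1 \pmod{6}$ (where $3 \mid k+2$ makes $L_{k+2}^2 - 9$ odd) and $k \equiv 5 \pmod{6}$ (where $3 \mid k+1$ makes $L_{k+1}^2 - 1$ odd) each force $\LucSumSquared$ to be odd, hence equal to $1$. The main obstacle, such as it is, is just keeping track of which of the three generators in the $\gcd$ formula is forced odd in each residue class; once that is tabulated the proof is essentially a two-line mod-$2$ check after the mod-$5$ calculation above.
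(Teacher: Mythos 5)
Your proposal is correct and follows essentially the same route as the paper: the even case via Theorem~\ref{thm:closed_form_for_GibSumSquared_when_k_even} with $5\mid\mu$, the bound $\LucSumSquared \mid 10$ from Theorem~\ref{thm:GibSumSquared_divides_mu_when_k_odd}, exclusion of the factor $5$ via the period-$4$ behavior of the Lucas sequence modulo $5$, and the parity analysis through Lemma~\ref{lem:even_L_k_values}. The only (harmless) differences are that you rule out $5$ directly by showing $L_{k+1}^2 - 1 \equiv 3 \pmod{5}$ rather than by contradiction, and you verify the $k \equiv 1,5 \pmod 6$ cases explicitly instead of by elimination.
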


\begin{proof}
Since $\mu = L_1^2 - L_0 L_1 - L_0^2 = -5$, Theorem~\ref{thm:closed_form_for_GibSumSquared_when_k_even} in the Lucas setting yields $\LucSumSquared = 5 F_k$ when $k$ is even since 5 divides $\mu$. On the other hand, when $k$ is odd, Theorem~\ref{thm:GibSumSquared_divides_mu_when_k_odd} in the Lucas setting yields $\LucSumSquared$ divides $|2 \mu|$, and hence $\LucSumSquared = 1, 2, 5, \text{ or } 10$. Recall that by Theorem~\ref{thm:Simple_GibSum_formula} in the Lucas setting, we have
$$\LucSumSquared = \gcd\left(L_k L_{k+1} - 2,\; L_{k+1}^2 - 1,\; L_{k+2}^2 - 9\right).$$
We will first rule out the possibility that $\LucSumSquared = 5 \text{ or } 10$. Suppose by way of contradiction that 5 divides $\LucSumSquared$. Then 5 divides both $L_{k+1}^2 - 1$ and $L_{k+2}^2 - 9$, and hence we have the congruences
\begin{align}
    L_{k+1}^2 &\equiv 1 \pmodd{5} \label{cong:1}\\
    L_{k+2}^2 &\equiv 4 \pmodd{5}. \label{cong:2}
\end{align}
It is readily verified that $\pi_L(5) = 4$, where $\pi_L(5)$ is the Pisano period of the Lucas sequence modulo~5 (recall Definition~\ref{def:Gisano_period}). This length 4 period repeats the sequence terms $(L_n \pmod{5})_{n=0}^3 = (2, 1, 3, 4)$. Squaring this sequence we get $(L_n^2 \pmod{5})_{n=0}^3 = (4,1,4,1)$, a repeating sequence of length 2. More precisely, $L_n^2 \equiv 4 \pmod{5}$ if and only if $n$ is even, and hence Congruences~\eqref{cong:1} and \eqref{cong:2} force $k$ to be even, which is a contradiction. Therefore, $\LucSumSquared \not= 5 \text{ or } 10$, and thus $\LucSumSquared = 1 \text{ or } 2$.

If $\LucSumSquared = 2$, then 2 divides $L_k L_{k+1} - 2$, $L_{k+1}^2 - 1$, and $L_{k+2}^2 - 9$. Since 2 divides $L_k L_{k+1} - 2$, then either $L_k$ or $L_{k+1}$ is even. However, 2 dividing $L_{k+1}^2 - 1$  implies $L_{k+1}$ is odd, and thus $L_k$ must be even. By Lemma~\ref{lem:even_L_k_values}, it follows that 3 divides $k$, and in particular $k \equiv 3 \pmod{6}$ since $k$ is odd. On the other hand, if $k \equiv 3 \pmod{6}$, then 3 divides $k$ and hence $F_k$ is even by Lemma~\ref{lem:even_L_k_values}, and so $L_{k+1}$ and $L_{k+2}$ are both odd implying $L_{k+1}^2 - 1$ and $L_{k+2}^2 - 1$ are both even. Thus $\LucSumSquared$ is even; that is, $\LucSumSquared = 2$ is forced. Thus if $k$ is odd, then $\LucSumSquared = 2$ if and only if $k \equiv 3 \pmod{6}$. And consequently if $k$ is odd, then $\LucSumSquared = 1$ if and only if $k \equiv 1,5 \pmod{6}$.
\end{proof}


\section{Open questions}\label{sec:open questions}

\subsection{Extension to higher powers}

\begin{question}
For $k$ even, the formulas for the GCD of all sums of $k$ consecutive Gibonacci numbers and the GCD of all sums of $k$ consecutive squares of Gibonacci numbers, respectively are
\begin{align*}
    \GibSum &= \gcd(G_{k+1}-G_1,\, G_{k+2}-G_2)\\
    \GibSumSquared &= \gcd\left(G_{k+1}^2 - G_1^2,\; G_{k+2}^2 - G_2^2\right).
\end{align*}
The first formula holds from Guyer-Mbirika~\cite[Theorem~15]{Guyer_Mbirika2021}, while the second formula holds from Theorem~\ref{thm:Simpler_GibSum_formula_for_k_even_versus_odd} in this current paper. For certain even values $k$ and $n \geq 3$, it seems reasonable that the following may hold:
$$ \mathcal{G}_{G_0, G_1}^n\!(k) = \gcd\left(G_{k+1}^n - G_1^n,\; G_{k+2}^n - G_2^n\right). $$
Although this does not appear to be true even in the case of $n=3$, data collected via \texttt{Mathematica} for this $n$ value in the Fibonacci and Lucas settings supports the following conjectures for $k$ even:
\begin{align*}
    \mathcal{F}^3(k) &= \begin{cases}
  \gcd\left(F_{k+1}^3 - 1,\; F_{k+2}^3 - 1\right),  & \text{if $6$ divides $k$}, \\
  \frac{1}{2} \cdot \gcd\left(F_{k+1}^3 - 1,\; F_{k+2}^3 - 1\right),  & \text{if $6$ does not divide $k$}.
  \end{cases}\\
  \mathcal{L}^3(k) &= \begin{cases}
  \gcd\left(L_{k+1}^3 - 1,\; L_{k+2}^3 - 9\right),  & \text{if $6$ divides $k$}, \\
  \frac{1}{2} \cdot \gcd\left(L_{k+1}^3 - 1,\; L_{k+2}^3 - 9\right),  & \text{if $6$ does not divide $k$}.
\end{cases}
\end{align*}
Can this conjecture not only be proved, but also extended to higher values $n \geq 4$?
\end{question}

\begin{question}
Fix $G_0, G_1 \in \mathbb{Z}$ and set $d := \gcd(G_0,G_1)$. Consider the two Gibonacci sequences $(G_n)_{n\geq 0}$ and $(G_n^\prime)_{n\geq 0}$, where $(G_n^\prime)_{n=0}^\infty$ is generated by the relatively prime initial conditions $G_0^\prime = \frac{G_0}{d}$ and $G_1^\prime = \frac{G_1}{d}$. Then the following identities holds:
\begin{align*}
    \GibSum &= d \cdot \GibSumPrime\\
    \GibSumSquared &= d^2 \cdot \GibSumPrimeSquared.
\end{align*}
The first formula holds from Guyer-Mbirika~\cite[Theorem~19]{Guyer_Mbirika2021}, while the second formula holds from Theorem~\ref{thm:relatively_prime_initial_values_only} in this current paper. Will it be the case that $\mathcal{G}_{G_0, G_1}^n\!(k) = d^n \cdot \mathcal{G}_{G_0^\prime, G_1^\prime}^n\!(k)$ for all $n \geq 3$?
\end{question}


\subsection{Periodicity of the sequences \texorpdfstring{$(\GibSum)_{k \geq 1}$}{GibSum} and \texorpdfstring{$(\GibSumSquared)_{k \geq 1}$}{GibSumSquared} when \texorpdfstring{$k$}{k} is odd}

\begin{example}
Consider the Gibonacci sequence with initial values $G_0 = -1$ and $G_1 = 3$. Then $\mu = 3^2 - (-1)(3) - (-1)^2 = 11$. Moreover for $k$ odd, we have the following (formal proofs omitted, but easily verified using the formulas for $\GibSum$ in Guyer-Mbirika~\cite[Theorem~15]{Guyer_Mbirika2021} and $\GibSumSquared$ in Theorem~\ref{thm:Simple_GibSum_formula} in this current paper, respectively):
$$ \GibSum = \begin{cases}
  2, & \text{if $k \equiv 3 \pmodd{6}$},\\
  1, & \text{if $k \equiv 1,5 \pmodd{6}$},
\end{cases}
$$
and
$$\GibSumSquared = \begin{cases}
  22, & \text{if $k \equiv 15 \pmodd{30}$},\\
  11, & \text{if $k \equiv 5,25 \pmodd{30}$},\\
  2, & \text{if $k \equiv 3,9,21,27 \pmodd{30}$},\\
  1, & \text{if $k \equiv 1,7,11,13,17,19,23,29 \pmodd{30}$}.
\end{cases}$$
Hence we have the periodic relationships in the sequences $(\GibSum)_{k \geq 1}$ and $(\GibSumSquared)_{k \geq 1}$:
$$ \mathcal{G}_{G_0, G_1}\!(k+6) = \GibSum \;\text{ and }\; \mathcal{G}_{G_0, G_1}^2\!(k+30) = \GibSumSquared,$$
for all odd $k$.
\end{example}

\begin{question}
Under what conditions on the initial values $G_0$ and $G_1$ are $\GibSum$ and $\GibSumSquared$ periodic on odd $k$ values?
\end{question}


\section*{Acknowledgments}
First author Mbirika gratefully thanks second author Spilker for his close reading of an earlier publication by Guyer and Mbirika, which led to this current collaboration between Mbirika and Spilker. We also thank \texttt{Mathematica} for much computer evidence that helped us conjecture some of the GCD values that led to the main results in this current paper. Finally, we thank the anonymous referee for helpful comments which improved the exposition in this paper.


\medskip

\noindent 2010 {\it Mathematics Subject Classification}:
Primary 11B39, Secondary 11A05, 11B50.


\begin{thebibliography}{10}


\bibitem{Benjamin2003} A.~Benjamin and J.~Quinn, \emph{Proofs that Really Count: The Art of Combinatorial Proof}, Mathematical Association of America, 2003.

\bibitem{Cerin2013} Z.~\v{C}erin, On factors of sums of consecutive Fibonacci and Lucas numbers, \emph{Ann. Math. Inform.}, \textbf{41} (2013), 19--25.

\bibitem{Demirturk2010} B.~Demirt\"urk, Fibonacci and Lucas sums by matrix methods, \emph{Int. Math. Forum}, \textbf{5} (2010), 99--107.


\bibitem{Guyer_Mbirika2021}
D.~Guyer and A.~Mbirika, GCD of sums of $k$ consecutive Fibonacci, Lucas, and generalized Fibonacci numbers,  \textit{J. Integer Seq.}, \textbf{24} (2021), no.~9, Article~21.9.8, 25~pp.

\bibitem{Iyer1969}
M.~Iyer, Sums involving {F}ibonacci numbers, \emph{Fibonacci Quarterly}, \textbf{7.1} (1969), 92--98.

\bibitem{Koshy2018}
T.~Koshy, \emph{{F}ibonacci and {L}ucas Numbers with Applications}, Volume I, second edition, Wiley, 2018.

\bibitem{Ruggles1963} I.~D.~Ruggles, Elementary problem B-1, \emph{Fibonacci Quarterly}, \textbf{1.1} (1963), 73.

\bibitem{Shtefan2018} D.~Shtefan and I.~Dobrovolska, The sums of the consecutive Fibonacci numbers, \emph{Fibonacci Quarterly}, \textbf{56.3} (2018), 229--236.

\bibitem{Tagiuri1901} A.~Tagiuri, Di alcune successioni ricorrenti a termini interi e positivi, \emph{Periodico di Matematica}, \textbf{16} (1901), 1--12.

\bibitem{Tekcan2007} A.~Tekcan, B.~Gezer, and O.~Bizim, Some relations on Lucas numbers and their sums, \emph{Adv. Stud. Contemp. Math. (Kyungshang)}, \textbf{15} (2007), 195--211.

\bibitem{Tekcan2008} A.~Tekcan, A.~\"{O}zko\c{c}, B.~Gezer, and O.~Bizim, Some relations involving the sums of Fibonacci numbers, \emph{Proc. Jangjeon Math. Soc.}, \textbf{11} (2008), 1--12.

\bibitem{Vajda1989}
S.~Vajda, \emph{{F}ibonacci and {L}ucas Numbers, and the Golden Section}, Ellis Horwood Limited, 1989.


\end{thebibliography}
\end{document}